\documentclass[11pt,a4paper]{elsarticle}
%%%%%%%%%%%%%%%%%%%%%%%%%%%%%%%%%%%%%%%%%%%%%%%%%%%%%%%%  usepackage  %%%%%%%%%%%%%%%%%%%%%%%%%%%%%%%%%%%%%%%%%%%%%%%%%%%%%%%%
\usepackage{amsmath,amsthm,amsfonts,amssymb}
\usepackage{latexsym,euscript,dsfont}
\usepackage{xcolor}
\usepackage{txfonts}
\usepackage{mathrsfs}
\usepackage{bbm}
\usepackage{colortbl}
\usepackage{epstopdf}
\usepackage{graphicx}
\usepackage[style=base]{caption}
\usepackage{subcaption}
\usepackage{enumerate}
\usepackage{url}
\usepackage[pagewise]{lineno}%\linenumbers

%\journal{*** of \LaTeX\ Templates}
%%%%%%%%%%%%%%%%%%%%%%%%%%%%%%%%%%%%%%%%%%%%%%%%%%%%%%%%%%%%%%%%%%%%%%%%%%%%%%%%%%%%%%%%%%%%%%%%%%%%%%%%%%%%%%%%%%%%%%%%%%%%%%%%%%%%%%%%%%
\textwidth=16cm
\textheight=23cm
\parindent=16pt
\oddsidemargin=-0.5cm
\evensidemargin=-0.5cm
\topmargin=-0.5cm
%%%%%%%%%%%%%%%%%%%%%%%%%%%%%%%%%%%%%%%%%%%%%%%%%%%%%%%%   newcommand  %%%%%%%%%%%%%%%%%%%%%%%%%%%%%%%%%%%%%%%%%%%%%%%%%%%%%%%%
\theoremstyle{plain}
\newtheorem{thm}{Theorem}[section]
\newtheorem{dfn}[thm]{Definiton}
\newtheorem{prop}[thm]{Proposition}
\newtheorem{lem}[thm]{Lemma}
\newtheorem{cor}[thm]{Corollary}

\newtheorem{rem}{Remark}

\numberwithin{equation}{section}
%%%%%%%%%%%%%%%%%%%%%%%%%%%%%%%%%%%%%%%%%%%%%%%%%%%%%%%%%%% Title %%%%%%%%%%%%%%%%%%%%%%%%%%%%%%%%%%%%%%%%%%%%%%%%%%%%%%%%%
\begin{document}
\title{On the distribution of the Cantor-integers}
\author[hzau]{Chun-Yun Cao}
\ead{caochunyun@mail.hzau.edu.cn}
\author[hzau]{Jie Yu}
\ead{yuj@webmail.hzau.edu.cn}

\address[hzau]{College of informatics, Huazhong Agricultural University, 430070 Wuhan, P.R.China.}
%%%%%%%%%%%%%%%%%%%%%%%%%%%%%%%%%%%%%%%%%%%%%%%%%%%%%%%%% Abstract %%%%%%%%%%%%%%%%%%%%%%%%%%%%%%%%%%%%%%%%%%%%%%%%%%%%%%%%%%%%%

\begin{abstract}
For any positive integer $p\geq 3$, let $A$ be a proper subset of $\{0,1,\ldots, p-1\}$ with $\sharp A=s\geq 2$. Suppose $h: \{0,1,\ldots,s-1\}\to A$ is a one-to-one map which is strictly increasing with $A=\{h(0),h(1),\ldots,h(s-1)\}$.
We focus on so-called  Cantor-integers $\{a_n\}_{n\geq 1}$, which consist of these positive integers $n$ such that all the digits in the $p$-ary expansion of $n$ belong to $A$.
Let $\mathfrak{C}=\left\{\sum\limits_{n\geq 1}\frac{\varepsilon_n}{p^n}: \varepsilon_n\in A \text{ for any positive integer } n\right\}$  be the appropriate Cantor set, and denote the classic self-similar measure supported on $\mathfrak{C}$ by $\mu_{\mathfrak{C}}$.
Now that $n^{\log_s p}$ is the growth order of $a_n$  and $\left\{\frac{a_n}{n^{\log_s p}}:~n\geq 1\right\}'$ is precisely the set $\left\{\frac{x}{(\mu_{\mathfrak{C}}([0,x]))^{\log_s p}}: x\in\mathfrak{C}\cap[\frac{h(1)}{p},1]\right\}$, where $E'$ is the set of limit points of $E$, we show that $\left\{\frac{a_n}{n^{\log_s p}}:~n\geq 1\right\}'$ is just an interval $[m,M]$  with $m:=\inf\left\{\frac{a_n}{n^{\log_s p}}:n\geq 1\right\}$ and $M:=\sup\left\{\frac{a_n}{n^{\log_s p}}:n\geq 1\right\}$.
In particular, $\left\{\frac{x}{(\mu_{\mathfrak{C}}([0,x]))^{\log_s p}}: x\in\mathfrak{C}\backslash\{0\}\right\}=[m,M]$ if $0\in A$, and $m=\frac{q(s-1)+r}{p-1}, M=\frac{q(p-1)+pr}{p-1}$ if the set $A$ consists of all the integers in $\{0,1,\ldots, p-1\}$ which have the same remainder $r\in\{0,1,\ldots,q-1\}$ modulus $q$ for some positive integer $q \geq 2$ (i.e. $h(x)=qx+r$).
We further show that the sequence $\left\{\frac{a_n}{n^{\log_s p}}\right\}_{n\geq 1}$  is not uniformly distributed modulo 1, and it does not have the cumulative distribution function, but has the logarithmic distribution function (give by a specific Lebesgue integral).
\end{abstract}
\begin{keyword}
Cantor-integers \sep  Uniform distribution modulo 1 \sep Cumulative distribution function \sep  Logarithmic distribution function  \sep  Self-similar measure
\MSC[2010]{	11N64 \sep 28A80}
\end{keyword}

\maketitle

\section{Introduction}%

The behavior of an arithmetical function $f(n)$ for large values of $n$  has always been one of the important problems in number theory. For example, the average order of a  fluctuating arithmetical function, the growth order of a monotone increasing arithmetical function. The distribution properties of the sequences derived from them have also been studied popularly.

An arithmetical function can be seen as a sequence. Let $\{r_n\}_{n\geq 0}$ be the Rudin-Shapiro sequence, the properties of the Rudin-Shapiro sums $s(n)=\sum_{k=0}^nr_k$ and $t(n)=\sum_{k=0}^n(-1)^kr_k$ have been developed by  Brillhart and  Morton in \cite{Brillhart78}, where it is showed that
\[\sqrt{\frac{3}{5}}<\frac{s(n)}{\sqrt{n}}<\sqrt{6} \qquad \text{and} \qquad 0\leq \frac{t(n)}{\sqrt{n}}<\sqrt{3},\]
for any $n\geq 1$, and that the sequences $\{s(n)/\sqrt{n}\}_{n\geq 1}$ and $\{t(n)/\sqrt{n}\}_{n\geq 1}$ are dense respectively in the intervals $[\sqrt{3/5},\sqrt{6}]$ and $[0,\sqrt{3}]$.
Five years later, in collaboration with  Erd\"{o}s, the authors further studied the distribution properties of the sequences $\{s(n)/\sqrt{n}\}_{n\geq 1}$ and $\{t(n)/\sqrt{n}\}_{n\geq 1}$ in \cite{Brillhart83}, they  showed that the sequences  do not have the cumulative distribution functions, but do have the logarithmic distribution functions at each point of the respective intervals $[\sqrt{3/5},\sqrt{6}]$ and $[0,\sqrt{3}]$.

In 2020, L\"{u}, Chen, Wen etc.\cite{Lv} introduced the quasi-linear integer sequence $f(n)$, which satisfies
\[\alpha:=\inf\left\{~\gamma\geq 0:~~\limsup_{n\to\infty}\frac{|f(n)|}{n^\gamma}=0~\right\}
>\beta:=\inf \left\{\gamma>0:~~\limsup_{n\to\infty} \frac{|f(n+1)-f(n)|}{n^\gamma}=0\right\},\]
and \[\left\{\frac{f(bn+i)-b^\alpha f(n)}{n^\beta}: n\geq 1, 0\leq i \leq b-1\right\} \text{ is bounded for some integer } b\geq 2.\]
They showed that the growth order of $f(n)$ is $n^\alpha$ (i. e. there exist $0<c_1\leq c_2$ such that $c_1\leq \frac{f(n)}{n^\alpha}\leq c_2$ for any $n$).
They focused on the limit function
\[\lambda(x):=\lim_{k\to\infty}\frac{f(b^kx)}{(b^kx)^\alpha}.\]
Their research shows that $\lambda$ is continuous, self-similar and bounded, further, $\{\lambda(x):x\geq 0\}$ is dense between any two limit points of the sequence $\left\{f(n)/n^\alpha \right\}_{n\geq 1}$. It is not hard to check that the Rudin-Shapiro sums $s(n)$ and $t(n)$ are both quasi-linear discrete functions.

For integers $p>s\geq 2$ and subset $A\subset\{0,1,\ldots,p-1\}$ with $\sharp A=s$.
We call an integer $n$ a Cantor integer if the digits in the  $p$-ary expansion of $n$ can only take values in $A$, which is named by analogy to the usual middle thirds Cantor set.

For convenience, let us introduce some notations which will be used throughout the text.
For any  integer $s\geq 2$ and any non-negative integer $n$, the  $s$-ary expansion of $n$ is denoted by
$$n=[\varepsilon_k \varepsilon_{k-1} \ldots \varepsilon_0]_s:=\sum_{i=0}^k\varepsilon_i s^i,$$ where the digit  $\varepsilon_i\in\{0,1,\ldots,s-1\}$ for any $0\leq i\leq k$. It would be noticed that the top digit $\varepsilon_k\neq 0$ if $n\neq 0$.
And for any $x\in(0,1)$, the  $s$-ary expansion of $x$ is denoted by
$$x=[0.~d_1 d_2 \ldots]_s:=\sum_{i=1}^\infty \frac{d_i} {s^i},$$
where the $s$-ary digit $d_i\in\{0,1,\ldots,s-1\}$ for any $i\geq 1$.
Thus for any real number $x\geq 0$, it can be written as
$x=[\varepsilon_k \varepsilon_{k-1} \ldots \varepsilon_0 ~.~d_1 d_2 \ldots]_s$, where $[\varepsilon_k \varepsilon_{k-1} \ldots \varepsilon_0]_s$ equals to $[x]$, known as the integer part of $x$, and $[0.~d_1 d_2 \ldots]_s$ equals to $\{x\}$, known as the fractional part of $x$.

Based on the notations above, suppose $h: \{0,1,\ldots,s-1\}\to A$ is a one-to-one map which is strictly increasing with $A=\{h(0),h(1),\ldots, h(s-1)\}$. Then for any positive integer $n=[\varepsilon_k \varepsilon_{k-1} \ldots \varepsilon_0]_s$, the $n$-th Cantor integer is
\[a_n=[h(\varepsilon_k) h(\varepsilon_{k-1}) \ldots h(\varepsilon_0)]_p.\]
Meanwhile, we adopt the convention that $a_0=0$. Note that
\[a_n=\sum_{i=0}^k h(\varepsilon_i) p^i =\sum_{i=0}^k \frac{h(\varepsilon_i)} {p^{k-i}}\cdot \left(\sum_{i=0}^k \frac{\varepsilon_i} {s^{k-i}}\right)^{-\log_s p}\cdot n^{\log_s p},\]
and $\sum_{i=0}^k \frac{h(\varepsilon_i)} {p^{k-i}}\in[1,p], \sum_{i=0}^k \frac{\varepsilon_i} {s^{k-i}}\in[1,s]$, we have that the growth order of $a_n$ is $n^{\log_s p}$. By further calculation, it could be obtained that $\alpha=\beta=\log_s p$ for the Cantor integers sequence. That is to say, the Cantor integers sequence what we concerned with are not quasi-linear discrete arithmetic functions.

Now, put
\begin{equation}\label{bn}
b_n:=\frac{a_n}{n^{\log_s p}}.
\end{equation}

The denseness of the sequence $\{b_n\}_{n\geq 1}$ defined in (\ref{bn}) has been discussed by Gawron and Ulas \cite{Gawron16} for the case $p=4, A=\{0,2\}$, and then by  Cao and Li \cite{Cao} for the case $p\geq 3, A=\{d:~~ d\in\{0,1,\ldots,p-1\},~ d \text{ is even}\}$. Moreover,  Cao and Li \cite{Cao}  established a connection between the Cantor integers and the self-similar measure. We will generalize their results and further investigate the distribution of the sequence $\{b_n\}_{n\geq 1}$, parallel to \cite{Brillhart83},  and  get some similar results about the Cantor integers. Moreover, we will consider whether or not the sequence $\{b_n\}_{n\geq 1}$ would be uniformly distributed modulo 1.

At first, we establish a close connection (in Section \ref{sec measure}) between the limit points of the sequence $\{b_n\}_{n\geq 1}$ and the self-similar probability measure $\mu_\mathfrak{C}$ which supported on the corresponding missing $p$-ary digit set
\[
\mathfrak{C}:=\left\{\sum_{n=1}^{\infty}\frac{\varepsilon_n}{p^n}: \varepsilon_n \in A \text{ for any positive integer } n\right\},
\]
with
\[\mu_\mathfrak{C}=\sum_{i=0}^{s-1}\frac{1}{s} \mu_\mathfrak{C}\circ S_i^{-1}, ~~\text{where} ~~S_i=\frac{x+h(i)}{p} ~~(i=0,1,\ldots, s-1).\]
\begin{thm}\label{thm measure}
Let $E'$ be the set of limit points of $E$, known as the derived set of $E$. One has
\[
\left\{\frac{x}{(\mu_\mathfrak{C}([0,x]))^{\log_s p}}: x\in\mathfrak{C}\cap\left[\frac{h(1)}{p},1\right]~\right\}=\{b_n:n\geq 1\}'.
\]
In particular, $\left\{\frac{x}{(\mu_\mathfrak{C}([0,x]))^{\log_s p}}: x\in\mathfrak{C}\backslash\{0\}~\right\}=\{b_n:n\geq 1\}'$ if $0\in A$ (i.e. $h(0)=0$).
\end{thm}

\vspace{10pt}

Then we discuss the limit points of the sequence $\{b_n\}_{n\geq 1}$, and show that each point between the supremum and infimum of $\{b_n\}_{n\geq 1}$ is an limit point of the sequence $\{b_n\}_{n\geq 1}$.

\begin{thm}\label{thm dense}
	The sequence $\{b_n\}_{n\geq 1}$ is dense in $[m,M]$, where $m=\inf \{b_n:n\geq 1\}, M=\sup \{b_n:n\geq 1\}$. That is to say, $
\{b_n:n\geq 1\}'=[m,M]$.

\end{thm}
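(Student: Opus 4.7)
By Theorem~\ref{thm measure} the set of accumulation points of $\{b_n\}$ is $\mathcal{A}:=\{F(x):x\in K\}$, where $K:=\mathfrak{C}\cap[h(1)/p,1]$ and $F(x):=x\,\mu_{\mathfrak{C}}([0,x])^{-\log_s p}$. Every $b_n$ is itself of the form $F(a_n/p^{L_n})$, where $L_n$ is the length of the $p$-ary expansion of $a_n$, and $a_n/p^{L_n}\in K$ since the top $p$-ary digit $h(\varepsilon_k)$ satisfies $h(\varepsilon_k)\geq h(1)$. Thus $\{b_n\}\subseteq\mathcal{A}$, so $\overline{\{b_n\}}=\mathcal{A}$; continuity of $F$ on the compact set $K$ then yields $m=\min\mathcal{A}$ and $M=\max\mathcal{A}$. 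The theorem therefore reduces to the connectedness assertion $F(K)=[m,M]$.

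To prove this, fix $\gamma\in(m,M)$ and seek $x^{\star}\in K$ with $F(x^{\star})=\gamma$. Choose $x^{-},x^{+}\in K$ with $F(x^{-})<\gamma<F(x^{+})$; writing their $p$-ary expansions $x^{\pm}=[0.d_{1}^{\pm}d_{2}^{\pm}\cdots]_p$ (all digits in $A$), form the digit-interpolated points $y_k:=[0.d_{1}^{+}\cdots d_{k}^{+}\,d_{k+1}^{-}d_{k+2}^{-}\cdots]_p\in K$. Then $y_0=x^{-}$, $y_k\to x^{+}$, and consecutive $y_k$'s differ only at position $k+1$, so $|y_{k+1}-y_k|\leq(p-1)/p^{k+1}$; uniform continuity of $F$ on the compact set $K$ forces $|F(y_{k+1})-F(y_k)|\to 0$. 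A discrete intermediate-value step produces an index $k^{\star}$ with $F(y_{k^{\star}-1})<\gamma\leq F(y_{k^{\star}})$ and $|F(y_{k^{\star}})-\gamma|\leq|F(y_{k^{\star}})-F(y_{k^{\star}-1})|$. To drive the right-hand side to zero, iterate: freeze the common initial segment of $y_{k^{\star}-1},y_{k^{\star}}$ and re-run the interpolation inside the corresponding cylinder with a freshly chosen pair of points that again bracket $\gamma$; the new crossing is forced to lie strictly deeper, so the discrepancy shrinks geometrically. A compactness/continuity argument extracts a limit $x^{\star}\in K$ with $F(x^{\star})=\gamma$.

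The main obstacle is ensuring at each iteration that the cylinder still contains points whose $F$-values bracket $\gamma$. The key ingredient is the self-similarity of $\mu_{\mathfrak{C}}$ combined with the defining identity $s^{\log_s p}=p$, which yields the explicit recursion
\begin{equation*}
F\bigl(S_{i_1}\!\circ\cdots\circ S_{i_L}(y)\bigr)=\frac{y+a_N}{\bigl(N+\mu_{\mathfrak{C}}([0,y])\bigr)^{\log_s p}},\qquad y\in\mathfrak{C},
\end{equation*}
with $N=[i_1\cdots i_L]_s$. This shows that the restriction of $F$ to every cylinder is an affine deformation of $F$ on all of $\mathfrak{C}$; its range is a translated-and-rescaled copy of the global range and in particular oscillates across any target in the interior of $F(K)$. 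This structural fact is what allows the bracketing step to be restored at every refinement and closes the induction, giving $F(K)=[m,M]$.
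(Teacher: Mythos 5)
Your reduction of the theorem to the connectedness claim $F(K)=[m,M]$ is reasonable, and the first interpolation step together with the discrete intermediate-value observation is fine. The fatal gap is in the iteration that is supposed to drive the discrepancy to zero. If you re-run the interpolation with the pair $y_{k^{\star}-1},y_{k^{\star}}$, these two points agree in the first $k^{\star}-1$ digits and have identical tails, so the new hybrids are constant equal to $y_{k^{\star}-1}$ for indices below $k^{\star}$ and constant equal to $y_{k^{\star}}$ from index $k^{\star}$ on: the new crossing occurs at the same depth $k^{\star}$ with exactly the same jump, and no progress is made. To advance you must exhibit a \emph{fresh} bracketing pair lying in a common cylinder of strictly greater depth, and the "key ingredient" you invoke to guarantee this is false. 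The recursion $F(S_{i_1}\circ\cdots\circ S_{i_L}(y))=(y+a_N)\bigl(N+\mu_{\mathfrak{C}}([0,y])\bigr)^{-\log_s p}$ is correct, but it does not make $F$ on a cylinder an affine copy of $F$, and the range of $F$ on a depth-$L$ cylinder certainly does not meet every target in $(m,M)$: by uniform continuity that range has diameter at most the oscillation of $F$ on a set of diameter $p^{-L}$, which tends to $0$. Nothing in your argument therefore rules out that $\gamma$ falls into a gap between $\sup F$ over one child cylinder and $\inf F$ over the adjacent one, i.e.\ that $\gamma\notin F(K)$ --- and since $K$ is totally disconnected, excluding such gaps is precisely the nontrivial content of the theorem. (A smaller issue: when $h(0)\neq0$ the point $a_n/p^{L_n}$ has trailing digits $0\notin A$, so it is not in $\mathfrak{C}$ and the inclusion $\{b_n\}\subseteq\mathcal{A}$ needs a separate justification.)

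What restores the bracketing at each level is an interlacing inequality between \emph{adjacent} cylinders: the value of $F$ at the right endpoint of the cylinder indexed by $N$ is $\bigl(a_N+\tfrac{h(s-1)}{p-1}\bigr)(N+1)^{-\log_s p}$, which is at most $b_{N+1}$, hence at most the value of $F$ at the left endpoint of the cylinder indexed by $N+1$, because $h(s-1)\le p-1$ and $a_{N+1}\ge a_N+1$. This is exactly what the paper extracts from Propositions \ref{pro m} and \ref{pro mon2} via the chain $b_{sn+s-1}<\cdots<b_{sn+1}<b_n\le b_{sn}$, which it uses to choose $n_{k+1}\in\{sn_k,\dots,sn_k+s-1\}$ so that $b_{n_{k+1}}\ge\gamma$ while the jumps $|b_{n_k}-b_{n_{k+1}}|$ are summable, and then to show the limit is also $\le\gamma$. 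Your scheme can be repaired by inserting this interlacing step (descend into the child cylinder at which the decreasing chain of values first drops below $\gamma$), but without some such quantitative comparison of neighbouring cylinders the induction does not close.
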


\vspace{10pt}

Theorem \ref{thm dense} urges us  to discuss the ``homogeneity" of the distribution of the sequence $\{b_n\}_{n\geq 1}$.

In the process, we introduce the  function $\lambda: \mathbb{R}^+ \to \mathbb{R}^+$ with
\[\lambda(x)=\lim_{k\to\infty}\frac{a(s^kx)}{(s^kx)^{\log_s p}},\]
where $a(x):=a_{[x]}$. We show that $\lambda(sx)=\lambda(x),   \lambda(x)\in[m,M]$ for any $x>0$, and
\[\{\lambda(x):x\geq 0 \text{ and } x \text{ is } s\text{-ary irrational number}\}\]
is dense in $[m,M]$.
We also discuss the continuity and the level sets of the function $\lambda $  and obtain the general results,
show that $\lambda$ is always continuous from the right at any  $x>0$ and continuous from the left at $s$-ary irrational number $x>0$,
as well as $\{x>0:~~\lambda(x)=\alpha\}$ has measure zero for any $\alpha\in[m,M]$.

Based on this, we show that the sequence $\{b_n\}_{n\geq 1}$ is not uniformly distributed modulo 1 and it does not have the natural distribution function.
\begin{thm}\label{thm u.d.}
The sequence $\{b_{n}\}_{n\geq 1}$ is not uniformly distributed modulo 1.
\end{thm}
Here a real sequence $\{x_n\}_{n\geq 1}$ is said to be uniformly distributed modulo 1 (abbreviated u. d. mod 1)  if for any interval $I\subset[0,1]$ one has
\[\lim_{N\to\infty}\frac{\sharp\{1\leq n\leq N: \{x_n\}\in I\}}{N}=|I|,\]
where $|I|$ is the length of the interval $I$. The formal definition of u. d. mod 1 was given by Weyl (\cite{Weyl14,Weyl16}).

\begin{thm}\label{thm cum}
	The cumulative distribution function of the sequence $\{b_{n}\}_{n\geq 1}$ does not exist at any point $\alpha\in (m,M)$.
\end{thm}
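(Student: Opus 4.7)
My plan is to exhibit two subsequences $N_k \to \infty$ along which the empirical distribution
\[F_N(\alpha) := \frac{1}{N}\sharp\{1 \leq n \leq N : b_n \leq \alpha\}\]
converges to different limits, precluding the existence of $\lim_{N\to\infty} F_N(\alpha)$. The key tool is the limit function $\lambda$ and its properties---continuity, the self-similarity $\lambda(sy)=\lambda(y)$, and the level-set analysis---all established just before this theorem.

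First, I would reduce each ``block'' $[s^{j-1}, s^j)$ to a Riemann sum against $\lambda$. Writing $y_n := n/s^{j-1} \in [1,s)$ for $n$ in this block, the definition of $\lambda$ and its continuity should yield uniform convergence $\max_{n \in [s^{j-1}, s^j)} |b_n - \lambda(y_n)| \to 0$ as $j \to \infty$. Since the $y_n$ form a $1/s^{j-1}$-equally spaced grid on $[1, s)$, a standard Riemann-sum argument will give
\[\frac{1}{s^{j-1}}\sharp\{n \in [s^{j-1}, s^j) : b_n \leq \alpha\} \longrightarrow \Phi(s), \qquad \Phi(r) := \big|\{y \in [1, r] : \lambda(y) \leq \alpha\}\big|,\]
provided the level set $\{y \in [1,s] : \lambda(y) = \alpha\}$ has Lebesgue measure zero, which I would invoke from the earlier analysis.

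Next, summing over blocks $j \leq k$ yields $F_{s^k}(\alpha) \to \Phi(s)/(s-1)$. For any fixed $r \in (1, s)$ with $N_k := [r s^k]$, the additional integers $n \in (s^k, N_k]$ satisfy $n/s^k \in (1, r]$, and the same Riemann-sum argument contributes an extra $s^k \Phi(r) + o(s^k)$ to the count, so
\[F_{N_k}(\alpha) \longrightarrow F_r(\alpha) := \frac{1}{r}\!\left(\frac{\Phi(s)}{s-1} + \Phi(r)\right).\]
If the cumulative distribution function existed at $\alpha$, then $F_r(\alpha)$ would be independent of $r$, forcing $\Phi(r) = (r-1)\Phi(s)/(s-1)$; equivalently, $\mathbf{1}_{\{\lambda(y) \leq \alpha\}}$ would have to be almost everywhere constant on $[1, s]$.

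Finally, I would derive the contradiction: since $\lambda$ is continuous on $[1, s]$ and its image equals $[m, M]$ (by Theorem \ref{thm dense} together with $\lambda(sy)=\lambda(y)$), and $\alpha \in (m, M)$, both open sets $\{\lambda < \alpha\}$ and $\{\lambda > \alpha\}$ are non-empty and thus of positive Lebesgue measure in $[1,s]$. So the indicator cannot be a.e.\ constant, contradicting the affine form of $\Phi$, and the limit $\lim_N F_N(\alpha)$ fails to exist. The main technical hurdle I expect is justifying the uniform convergence $b_n - \lambda(y_n) \to 0$ and the accompanying control of contributions lying near the level set $\{\lambda = \alpha\}$; both should follow from the continuity and level-set properties of $\lambda$ established in the preceding section.
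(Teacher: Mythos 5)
Your argument is correct in outline, but it follows a genuinely different route from the paper's. The paper argues locally: by Corollary \ref{cor lam} it picks a continuity point $x_1$ of $\lambda$ with $\lambda(x_1)<\alpha$, uses the uniform rate $|\lambda(x)-a(s^kx)(s^kx)^{-\log_s p}|\le p^{-k}x^{-\log_s p}$ from Proposition \ref{pro rewrite} to conclude that \emph{every} integer $n$ in the scaled window $[s^k(x_1-\eta_1),s^k(x_1+\eta_1)]$ satisfies $b_n<\alpha$, and then feeds the resulting identity $D(s^k(x_1+\eta_1),\alpha)=D(s^k(x_1-\eta_1),\alpha)+2s^k\eta_1+O(1)$ into the assumed limit to force $D(\alpha)=1$; the symmetric argument with $\lambda(x_2)>\alpha$ forces $D(\alpha)=0$. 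That proof needs only Corollary \ref{cor lam} and Proposition \ref{pro rewrite}. Your proof is global: you compute the actual subsequential limits of $N^{-1}D(N,\alpha)$ along $N_k=[rs^k]$ via Riemann sums of $\mathbf{1}_{\{\lambda\le\alpha\}}$, and derive the contradiction from the $r$-dependence. This is essentially the machinery the paper deploys later for the \emph{logarithmic} distribution (Theorem \ref{thm log}): you need the measure-zero level sets (Lemma \ref{lem measurezero}) to sandwich $\{b_n\le\alpha\}$ between $\{\lambda(y_n)\le\alpha\mp p^{-(k-1)}\}$, and Riemann integrability of the indicator (Lemma \ref{lem_integral}), neither of which the paper's proof of this theorem requires. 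In exchange, your approach yields strictly more: the explicit family of subsequential limits $\frac{1}{r}\bigl(\frac{\Phi(s)}{s-1}+\Phi(r)\bigr)$, in the spirit of Brillhart--Erd\H{o}s--Morton.

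One inaccuracy needs repair in your final step: $\lambda$ is \emph{not} continuous on $[1,s]$ in general --- by Proposition \ref{pro_con} it is right-continuous everywhere but can jump at $s$-ary rationals --- and its image is only dense in $[m,M]$ (Proposition \ref{pro lam}), not equal to it. So $\{\lambda<\alpha\}$ and $\{\lambda>\alpha\}$ need not be open. This does not break the argument: density gives points $y_1,y_2$ with $\lambda(y_1)<\alpha<\lambda(y_2)$, and right-continuity gives half-open intervals $[y_i,y_i+\delta_i)$ on which the strict inequalities persist, so both sets still have positive Lebesgue measure and the indicator cannot be a.e.\ constant.
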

By this we mean the limit $\lim\limits_{x\to\infty}x^{-1}D(x,\alpha)$ does not exist and $D(x,\alpha)$ denotes the number of times $b_n\leq \alpha$ for $1\leq n\leq x$.

\vspace{10pt}

In the positive direction, we prove that a modified distribution function for $\{b_n\}_{n\geq 1}$ does exist.
\begin{thm} \label{thm log}
	For any  $\alpha\in[m,M]$, the logarithmic distribution function of the sequence $\{b_{n}\}_{n\geq 1}$ exists at $\alpha$, and has the value
\[
L(\alpha)=\frac{1}{\ln s}\int_{E_\alpha}\frac{1}{x}dx,
\]
where $E_\alpha=\{x\in[s^{-1},1),\lambda(x)\le\alpha\}$ and the integral is a Lebesgue integral.
\end{thm}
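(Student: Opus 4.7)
The plan is to decompose the partial logarithmic sum
\[
S_N(\alpha) := \sum_{\substack{1 \leq n \leq N \\ b_n \leq \alpha}} \frac{1}{n}
\]
into contributions from $s$-adic blocks $[s^k, s^{k+1})$ and recognize each block as a Riemann sum. Writing $K = \lfloor \log_s N \rfloor$ and setting
\[
T_k(\alpha) := \sum_{\substack{s^k \leq n < s^{k+1} \\ b_n \leq \alpha}} \frac{1}{n}, \qquad I(\alpha) := \int_{E_\alpha}\frac{dx}{x},
\]
one has $S_N(\alpha) = \sum_{k=0}^{K-1} T_k(\alpha) + O(1)$. If I can establish $T_k(\alpha) \to I(\alpha)$ as $k \to \infty$, then Cesaro averaging combined with $\ln N = K \ln s + O(1)$ yields $L(\alpha) = I(\alpha)/\ln s$, which is the stated formula.

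The reduction to a Riemann sum uses the key identity $b_n = \lambda(n/s^{k+1})$ for every $n \in [s^k, s^{k+1})$. This follows from the facts that $\lambda(n) = b_n$ for every positive integer $n$ (immediate from the definition of $\lambda$) together with the scaling invariance $\lambda(sx) = \lambda(x)$; both are consequences of the self-similar structure of $\{a_n\}$ recorded in the sections on $\lambda$ preceding this theorem. Setting $x_n := n/s^{k+1} \in [s^{-1}, 1)$ and writing $1/n = s^{-(k+1)}/x_n$, one obtains
\[
T_k(\alpha) = \frac{1}{s^{k+1}}\sum_{n=s^k}^{s^{k+1}-1}\frac{\mathbf{1}_{E_\alpha}(x_n)}{x_n},
\]
which is precisely a left-endpoint Riemann sum of mesh $s^{-(k+1)}$ for $I(\alpha) = \int_{s^{-1}}^{1} \mathbf{1}_{E_\alpha}(x)\, x^{-1}\, dx$.

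The hard part is showing that this Riemann sum actually converges to the Lebesgue integral. Since $\mathbf{1}_{E_\alpha}/x$ is bounded on $[s^{-1}, 1)$, Riemann integrability reduces to the assertion that the set of discontinuities of $\mathbf{1}_{E_\alpha}$, which is contained in the level set $\{x : \lambda(x) = \alpha\}$, has Lebesgue measure zero. For this I would invoke the structural results about $\lambda$ from the preceding sections: $\lambda$ is continuous off a countable set, and its level sets are Lebesgue-null for every $\alpha$ (the latter being the deeper fact, traceable to the self-affine representation of $\lambda$ through the monotone digit coding $\phi: [0,1] \to \mathfrak{C}$, under which $\lambda(x) = \alpha$ becomes a rigid relation holding only on a null set). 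Once this is in hand, Riemann integrability gives $T_k(\alpha) \to I(\alpha)$ and the Cesaro step above completes the proof; the endpoint cases $\alpha = m, M$ are absorbed by the same formula, with a minor extra check at $\alpha = m$ when $E_m$ itself happens to be Lebesgue-null, where one must argue directly from the sparseness of the extremal indices that $T_k(m) \to 0$.
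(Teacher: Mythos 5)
Your overall architecture (block decomposition over $[s^k,s^{k+1})$, recognition of each block as a Riemann sum for $\int_{E_\alpha}x^{-1}dx$, Riemann integrability via the measure-zero level sets of $\lambda$, then Ces\`aro averaging) is exactly the paper's. But there is a genuine gap at the step you call the ``key identity'': $b_n=\lambda(n/s^{k+1})$, equivalently $\lambda(n)=b_n$ for integers $n$, is \emph{false} in general. From the paper's Proposition~\ref{pro rewrite}, $\lambda(x)=(a(x)+\phi(x))/x^{\log_s p}$ with $\phi(n)=\sum_{j\ge1}h(0)p^{-j}=h(0)/(p-1)$ for integer $n$, so
\[
\lambda(n)=b_n+\frac{h(0)}{(p-1)\,n^{\log_s p}},
\]
and this equals $b_n$ only when $h(0)=0$, i.e.\ only when $0\in A$ (cf.\ Proposition~\ref{pro mon2}: $b_{sn}=b_n$ iff $h(0)=0$). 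The theorem covers sets $A$ with $0\notin A$ (e.g.\ the linear case $h(x)=qx+r$ with $r\ge1$), and there the condition $b_n\le\alpha$ is not the condition $n/s^{k+1}\in E_\alpha$, so your $T_k(\alpha)$ is not a Riemann sum for $I(\alpha)$.

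The repair is not merely cosmetic. The paper uses the quantitative bound $|\lambda(n/s^k)-b_n|\le p^{-(k-1)}$ for $n$ in the $k$-th block to sandwich the true block sum between the Riemann sums $\sigma_k^*(\alpha-p^{-(k-1)})$ and $\sigma_k^*(\alpha+p^{-(k-1)})$ attached to the perturbed level sets $E_{\alpha\mp p^{-(k-1)}}$. To collapse this sandwich one then needs the continuity of $\alpha\mapsto h(\alpha):=\int_{E_\alpha}x^{-1}dx$, which the paper deduces from $\mathcal{L}(S_\alpha)=0$ (Lemma~\ref{lem measurezero}) together with $\bigcap_m E_{\alpha+1/m}=E_\alpha$ and $\bigcup_m E_{\alpha-1/m}=E_\alpha\setminus S_\alpha$. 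You already invoke the measure-zero level-set lemma, but only for Riemann integrability of $\mathbf{1}_{E_\alpha}/x$; you would need to invoke it a second time, in this different role, to absorb the $O(p^{-k})$ mismatch between $b_n$ and $\lambda(n/s^{k+1})$. As a side remark, your ``minor extra check at $\alpha=m$'' is unnecessary once the sandwich-plus-continuity argument is in place: it handles all $\alpha\in[m,M]$ uniformly, since $h(m)=0$ forces the upper bound $\sigma_k^*(m+p^{-(k-1)})\to0$.
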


The definition of the logarithmic distribution function used here comes from  \cite{Brillhart83}, which are defined as follows,
\[L(\alpha):=\lim_{x\to\infty}\frac{1}{\ln x}\sum_{1\leq n\leq x,b_n\leq \alpha}\frac{1}{n}.\]

\vspace{10pt}

At last, we consider the Cantor integers sequence consists of the non-negative integers  $n$ whose digits in the $p$-ary expansions  of them have the same remainder $r$  modulus  $q$, where positive integers $q \geq 2$ and $p>q+r$ with $r\in\{0,1,\ldots,q-1\}$. It can be seen as the special case of $h(x)=qx+r$. This is  why they are called the ``linear" Cantor integers sequences. We give the exact value of the supremum and infimum of $\{b_n\}_{n\geq 1}$.

\begin{thm}\label{thm bounded}
For any positive integers $q \geq 2$ and $p>q+r$ with $r\in\{0,1,\ldots,q-1\}$, let $s=\lceil \frac{p-r}{q}\rceil$ and $\lceil \cdot \rceil$ be the ceil function. When $A=\{qi+r: i\in\{0,1,\ldots,s-1\}\}$, one has
$$m=\frac{q (s-1)+r}{p-1},~~~M=\frac{q (p-1)+pr}{p-1}.$$
\end{thm}

\section{The connection of $\{b_n\}_{n\geq 1}$ and the self-similar measure $\mu_\mathfrak{C}$ }\label{sec measure}
Recall that the Cantor set
\[
\mathfrak{C}:=\left\{\sum_{n=1}^{\infty}\frac{\varepsilon_n}{p^n}: \varepsilon_n \in A \text{ for any positive integer } n\right\}
\]
is the attractor of the family of contracting self-maps $\{S_i\}_{i=0}^{s-1}$ of $[0,1]$ with
\[S_i(x)=\frac{x}{p}+\frac{h(i)}{p}, ~~~i\in\{0,1,\ldots,s-1\}.\]

Let $\textbf{P}(\mathfrak{C})$ be the set of the Borel probability measures on $\mathfrak{C}$, and
\[L(\nu,\nu')=\sup_{\mathrm{Lip}(g)\leq 1}\left|\int gd\nu-\int gd\nu'\right|, \text{ with } \mathrm{Lip} (g)=\sup_{x\neq y}\left|\frac{g(x)-g(y)}{x-y}\right|,\]
be the dual Lipschitz metric on the space  $\textbf{P}(\mathfrak{C})$. Since the mapping $F$ defined on $\textbf{P}(\mathfrak{C})$ by
\[F(\nu)=\sum_{i=0}^{s-1}\frac{1}{s}\nu S_i^{-1}\]
is a contracting self-map of the compact metric space $(\textbf{P}(\mathfrak{C}),L(\nu,\nu'))$. By the fundamental theorem on iterated function systems and the compact fixed-point theorem, one has
\[F^k(\delta_0)\to\mu_\mathfrak{C} \text{ as } k\to\infty,\]
where $\delta_0$ is the Dirac measure concentrated at $0$. Note that for any $x\in[0,1]$, $\mu_\mathfrak{C}(\{x\})=0$, one has
\[\lim_{k\to\infty}F^k(\delta_0)([0,x])=\mu_\mathfrak{C}([0,x]),\]
 by Portmanteau Theorem. Now, let us prove Theorem \ref{thm measure}.

For any $x\in\mathfrak{C}\cap\left [\frac{h(1)}{p},1\right]$, suppose
\[x=\sum_{i=1}^\infty\frac{h(\varepsilon_i)}{p^i}, \text{ where } \varepsilon_i\in\{0,1,\ldots,s-1\},\qquad  \varepsilon_1\neq 0.\]
Since
\[F^k(\delta_0)=\sum_{i_1\ldots i_k\in \{0,1,\ldots,s-1\}^k}\frac{1}{s^k}\delta_{S_{i_1}\circ  \cdots  \circ S_{i_k}(0)}.\]
Then for any $k\geq 1$, if we put $n_k=\sum_{i=1}^k\varepsilon_i s^{k-i}$, one has
\[a_{n_k}=[p^kx],\qquad x=\lim_{k\to\infty}\frac{a_{n_k}}{p^k},\]
and
\[ F^k(\delta_0)([0,x])=\frac{\sharp\{i_1\ldots i_k\in\{0,1,\ldots,s-1\}^k:S_{i_1}\circ \cdots \circ S_{i_k}(0)\in[0,x]\}}{s^k}=\frac{n_k+1}{s^k}.\]
Thus
\[\frac{x}{(\mu_\mathfrak{C}([0,x]))^{\log_s p}}=\lim_{k\to\infty}\frac{a_{n_k}/p^k}{((n_k+1)/s^k)^{\log_s p}}
=\lim_{k\to\infty}\frac{a_{n_k}}{(n_k+1)^{\log_s p}}=\lim_{k\to\infty}\frac{a_{n_k}}{n_k^{\log_s p}}.\]
That is to say, $\frac{x}{(\mu_\mathfrak{C}([0,x]))^{\log_s p}}$ is the limit point of $\{b_n\}_{n\geq 1}$.
\vspace{10pt}

For any limit point $\gamma$ of $\{b_n\}_{n\geq 1}$. Assume that $\{n_k\}_{k\geq 1}$ be a subsequence with
\[\lim_{k\to\infty}b_{n_k}=\gamma.\]
Note that $\left\{\frac{a_{n_k}}{p^{\ell_k}}\right\}_{k\geq 1}$ and $\left\{\frac{n_k}{s^{\ell_k}}\right\}_{k\geq 1}$ are both bounded, without loss of generation, we can further ask that
\[\lim_{k\to\infty}\frac{a_{n_k}}{p^{\ell_k}} ~~~~~~~~~~~~\text{and}~~~~~~~~~~~ \lim_{k\to \infty}\frac{n_k}{s^{\ell_k}}\]
both exist and denote the limit values are $x$ and $t$ respectively. It is clearly  that $x\in[h(1)/p,1]$ and $t\in[1/s,1]$.

For any $k\geq 1$, let $\ell_k$ be the unique integer such that $s^{\ell_k-1}\leq n_k <s^{\ell_k}$. At this time,
\[\gamma=\lim_{k\to\infty}b_{n_k}=\lim_{k\to\infty}\frac{a_{n_k}/p^{\ell_k}}{(n_k/s^{\ell_k})^{\log_s p}}
=\lim_{k\to\infty}\frac{a_{n_k}/p^{\ell_k}}{((n_k+1)/s^{\ell_k})^{\log_s p}}=\frac{x}{(\mu_\mathfrak{C}([0,x]))^{\log_s p}}\]
for the above $x$, since
\[
\frac{n_k+1}{s^{\ell_k}}=\left(F^{\ell_k}(\delta_0)\left(\left[0,\frac{a_{n_k}}{p^{\ell_k}}\right]\right)-F^{\ell_k}(\delta_0)([0,x])\right)
+\left(F^{\ell_k}(\delta_0)([0,x])-\mu_{\mathfrak{C}}([0,x])\right)+\mu_{\mathfrak{C}}([0,x]),
\]
and
\[\lim_{k\to\infty}\left(F^{\ell_k}(\delta_0)\left(\left[0,\frac{a_{n_k}}{p^{\ell_k}}\right]\right)-F^{\ell_k}(\delta_0)([0,x])\right)=0,
\qquad
\lim_{k\to\infty}F^{\ell_k}(\delta_0)([0,x])=\mu_{\mathfrak{C}}([0,x]).
\]

When $0\in A$, i. e., $h(0)=0$,
for any $x\in\mathfrak{C}\cap\left (0,\frac{h(1)}{p}\right)$, suppose
\[x=\sum_{i=1}^\infty\frac{h(\varepsilon_i)}{p^i}, \text{ where } \varepsilon_i\in\{0,1,\ldots,s-1\}, \text{ and } ~~k_0=\min\{i:\varepsilon_i\neq 0\}.\]
It is clear that $k_0>1$ and $p^{k_0-1}x\in \mathfrak{C}\cap\left [\frac{h(1)}{p},1\right]$. By the definition of $\mu_\mathfrak{C}$, one has $\mu_\mathfrak{C}([0,x])=s^{-k_0+1}\mu_\mathfrak{C}([0,p^{k_0-1}x])$, and thus
\[
\frac{x}{(\mu_\mathfrak{C}([0,x]))^{\log_s p}}=\frac{p^{k_0-1}x}{\mu_\mathfrak{C}([0,p^{k_0-1}x]))}.
\]
All of this means the results of Theorem \ref{thm measure} are correct.
\begin{rem}
Let $p=4, s=2, A=\{1,3\} (i.e.,h(x)=2x+1), x=\frac{3}{8}\in\mathfrak{C}\cap(\frac{1}{4},\frac{3}{4})=\mathfrak{C}\cap\left(\frac{h(0)}{p},\frac{h(1)}{p}\right)$. Then $\mu_\mathfrak{C}([0,x])=\frac{1}{4}$, and $\frac{x}{(\mu_\mathfrak{C}([0,x]))^{\log_2 4}}=6\notin[1,\frac{10}{3}]$. Where $m=1,M=\frac{10}{3}$ base on  Theorem \ref{thm bounded}.
\end{rem}

\section{The denseness of $\{b_n\}_{n\geq 1}$}\label{sec dense}
We begin the proof of Theorem \ref{thm dense} with the facts that
(1) $a_n\geq n$ for any integer $n\geq 1$,
(2) $a_i=h(i)$ for any $i\in\{0,1,\ldots, s-1\}$,
(3) $a_{sn+i}=pa_n+h(i)$ for any integer $n\geq 1$ and $i\in\{0,1,\ldots, s-1\}$,
as well as the following properties of $b_n$.

\begin{prop}\label{pro m}
For any positive integer $n$, one has $b_{s^\ell n+s^\ell-1}<b_n$ for any large $\ell$ with $1-s^{-\ell}>\log_p s$.
\end{prop}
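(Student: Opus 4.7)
The plan is to reduce the claim to a clean closed-form comparison by iterating the recursion $a_{sm+i}=pa_m+h(i)$, and then to close the resulting inequality with Bernoulli's inequality calibrated by the hypothesis on $\ell$. First I would observe the decomposition $s^\ell n+s^\ell-1 = s(s^{\ell-1}n+s^{\ell-1}-1)+(s-1)$ and apply the recursion with $i=s-1$; an easy induction on $\ell$ then yields the identity
\[
a_{s^\ell n+s^\ell-1}=p^\ell a_n+h(s-1)\,\frac{p^\ell-1}{p-1}.
\]

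Writing $\alpha=\log_s p$ (so that $(s^\ell)^\alpha=p^\ell$ and $\alpha>1$ since $p>s$), the target inequality $b_{s^\ell n+s^\ell-1}<b_n$ becomes, after dividing through by $p^\ell/n^\alpha$,
\[
h(s-1)\,\frac{p^\ell-1}{p^\ell(p-1)}<a_n\left[\left(1+\frac{s^\ell-1}{s^\ell n}\right)^\alpha-1\right].
\]
For the left-hand side, the crude estimate $h(s-1)\leq p-1$ gives an upper bound of $1-p^{-\ell}<1$. For the right-hand side, the strict Bernoulli inequality $(1+x)^\alpha>1+\alpha x$ (valid since $\alpha>1$ and $x=(s^\ell-1)/(s^\ell n)>0$) combined with the listed fact $a_n\geq n$ gives a lower bound of $\alpha(1-s^{-\ell})$.

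Finally, the hypothesis $1-s^{-\ell}>\log_p s=1/\alpha$ is exactly equivalent to $\alpha(1-s^{-\ell})>1$, so the two estimates chain as $\mathrm{LHS}\leq 1-p^{-\ell}<1<\alpha(1-s^{-\ell})<\mathrm{RHS}$, which is the required strict inequality. I do not anticipate a genuine obstacle here; the only substantive point is recognizing that the hypothesis on $\ell$ is calibrated precisely so that Bernoulli suffices, and that even the coarsest bounds $a_n\geq n$ and $h(s-1)\leq p-1$ leave just enough slack to close the inequality.
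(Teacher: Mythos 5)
Your proof is correct and follows essentially the same route as the paper's: both rest on the identity $a_{s^\ell n+s^\ell-1}=p^\ell a_n+h(s-1)\frac{p^\ell-1}{p-1}$, the crude bounds $h(s-1)\le p-1$ and $a_n\ge n$, and Bernoulli's inequality, with the hypothesis $1-s^{-\ell}>\log_p s$ supplying exactly the needed slack. The only difference is cosmetic bookkeeping (you isolate the two sides before estimating, whereas the paper bounds the denominator first), so no further comment is needed.
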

\begin{proof}
Note that
\[b_{s^\ell n+s^\ell-1}
=\frac{p^\ell a_n+\frac{h(s-1)}{p-1}(p^\ell-1)}{(s^\ell n+s^\ell-1)^{\log_s p}}
\leq \frac{a_n+1}{(n+\log_p s)^{\log_s p}}
=b_n\cdot \frac{1+a_n^{-1}}{(1+n^{-1} \log_p s)^{\log_s p}},
\]
since $h(s-1)\leq p-1$ and $1-s^{-\ell}>\log_p s$. Combine this with the facts that
\[a_n\geq n~~ \text{   and    } ~~(1+n^{-1}\log_p s)^{\log_s p}> 1+n^{-1},\] one has $b_{s^\ell n+s^\ell-1}<b_n$.
\end{proof}

\begin{prop}\label{pro mon2}
$b_{s n+{s-1}}< b_{s n+{s-2}}< \cdots < b_{s n+1}< b_n$
when $n$ large enough, and $b_n \leq b_{s n}$ for any $n\geq 1$. Moreover $b_n=b_{s n}$ if and only if $h(0)=0$.
\end{prop}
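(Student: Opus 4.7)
The plan is to exploit the recursion $a_{sn+i}=pa_n+h(i)$ together with the identity $p=s^{\alpha}$, where $\alpha:=\log_s p$. Dividing both sides by $(sn+i)^{\alpha}$ and writing $pa_n=b_n(sn)^{\alpha}$ yields the exact formula
\[
b_{sn+i}=b_n\left(\frac{sn}{sn+i}\right)^{\!\alpha}+\frac{h(i)}{(sn+i)^{\alpha}},\qquad i\in\{0,1,\ldots,s-1\},
\]
which will be the workhorse of the proof. Setting $i=0$ collapses the first factor to $1$ and gives $b_{sn}=b_n+h(0)/(sn)^{\alpha}$; since $h(0)\geq 0$, this proves $b_n\leq b_{sn}$ for every $n\geq 1$, with equality if and only if $h(0)=0$. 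That immediately disposes of the second assertion of the proposition, which is valid for all $n$.

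For the chain $b_{sn+s-1}<\cdots<b_{sn+1}<b_n$, I would form the differences $d_i:=b_{sn+i}-b_{sn+i+1}$ for $1\leq i\leq s-2$ and $d_0:=b_n-b_{sn+1}$. Via the identity above, each $d_i$ splits as a principal term $b_n\bigl[(sn/(sn+i))^{\alpha}-(sn/(sn+i+1))^{\alpha}\bigr]$ (with the first bracket replaced by $1$ at $i=0$) plus a correction $h(i)/(sn+i)^{\alpha}-h(i+1)/(sn+i+1)^{\alpha}$. A mean-value estimate shows the principal term is of order $\alpha b_n/(sn)$ and, using the lower bound $b_n\geq c_1>0$ recalled in the introduction, is bounded below by $c_1\alpha/(2sn)$ once $n$ is large. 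The correction is crudely bounded by $2(p-1)/(sn)^{\alpha}$ in absolute value.

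The decisive step is a comparison of scales. Because $A$ is a proper subset of $\{0,\ldots,p-1\}$ we have $p>s$, hence $\alpha=\log_s p>1$, so $(sn)^{\alpha}$ grows strictly faster than $sn$. Therefore $c_1\alpha/(2sn)$ eventually dominates $2(p-1)/(sn)^{\alpha}$, forcing $d_i>0$ uniformly over the finite range $0\leq i\leq s-2$ for $n$ large enough; this is exactly the desired chain of strict inequalities.

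The main obstacle I anticipate is the careful bookkeeping of the three small parameters $1/n$, $i/(sn)$, and $(sn)^{-\alpha}$ in the asymptotic expansions, while keeping the estimate uniform in $i$. The essential content sits in the fact that $\alpha>1$: if $\alpha$ were equal to $1$ the principal and correction terms would be of the same order and the inequality would be false in general, so the proper-subset hypothesis $s<p$ is being used in an essential way exactly here.
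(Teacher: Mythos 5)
Your proposal is correct and follows essentially the same route as the paper: both rest on the recursion $a_{sn+i}=pa_n+h(i)$, the lower bound $b_n\geq c_1>0$, and the comparison of a principal term of order $1/n$ against a correction of order $1/n^{\log_s p}$, decided by $\log_s p>1$. Your exact identity $b_{sn+i}=b_n\bigl(\tfrac{sn}{sn+i}\bigr)^{\alpha}+\tfrac{h(i)}{(sn+i)^{\alpha}}$ is just a clean repackaging of the paper's equivalent inequality $1+\tfrac{h(i+1)-h(i)}{pa_n+h(i)}<\bigl(1+\tfrac{1}{sn+i}\bigr)^{\log_s p}$.
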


\begin{proof}
Note that for any $i\in\{1,2,\ldots, s-2\}$, $b_{sn+i+1}<b_{sn+i}$ if and only if
\[1+\frac{h(i+1)-h(i)}{pa_n+h(i)}<\left(1+\frac{1}{sn+i}\right)^{\log_s p},\]
and the facts
\[\left(1+\frac{1}{sn+i}\right)^{\log_s p}>1+\log_s p\cdot \frac{1}{sn+i}, ~~~~~~~~~~~~0\leq h(i)\leq p-1.\]
We have $b_{s n+{s-1}}< b_{s n+{s-2}}< \cdots < b_{s n+1}$ for $n$ large enough
since the growth order of $a_n$ is $n^{\log_s p}$ and $\log_s p>1$.

Similarly,
\[1+\frac{h(1)}{pa_n}<\left(1+\frac{1}{sn}\right)^{\log_s p}\] implies $b_{sn+1}<b_n$ when $n$ large enough.

At last, for any positive integer $n$, $b_n\leq b_{sn}$ if and only if $h(0)\geq 0$, and $b_n=b_{sn}$ holds if and only if $h(0)=0$.
\end{proof}

For convenience, suppose the inequalities $b_{s n+{s-1}}< b_{s n+{s-2}}< \cdots < b_{s n+1}< b_n \leq b_{s n}$ in Proposition \ref{pro mon2} hold for $n>N_0$ and $s^{k_0-1}\leq N_0<s^{k_0}$ for some positive integer $k_0$.

At first, we show that infimum $m=\inf \{b_n:n\geq 1\}$ and the supremum $M=\sup \{b_n:n\geq 1\}$ are both the limit points of $\{b_{n}\}_{n\geq 1}$.

Note that $b_n\neq m$ for any integer $n\geq 1$, which follows from Proposition \ref{pro m}. Thus the infimum $m$ must be the limit point of  $\{b_{n}\}_{n\geq 1}$.

Similarly, the supremum $M$ must be the limit point of  $\{b_{n}\}_{n\geq 1}$ in the case of $h(0)\neq 0$ since $b_n<b_{sn}$ for any $n\geq 1$.

When $h(0)=0$. Let $b_{n_0}=\max\{b_n: 1\leq n< s^{k_0+1}\}$ for some $1\leq n_0< s^{k_0+1}$. Then for any $n\geq s^{k_0+1}$, assume $n=[\varepsilon_k \ldots \varepsilon_1 \varepsilon_0]_s$ with $k\geq k_0+1$, by Proposition \ref{pro mon2}, one has
\[b_n\leq b_{[\varepsilon_k \ldots \varepsilon_1]_s}\leq \cdots\leq b_{[\varepsilon_k \ldots \varepsilon_{k-k_0}]_s}\leq b_{n_0},\]
since $[\varepsilon_k \ldots \varepsilon_{k-k_0}]_s\geq s^{k_0}>N_0$. Thus
\[M=b_{n_0}=\lim_{k\to\infty}b_{s^kn_0}.\]
This is due to the fact that $b_{sn}=b_n$ for any $n\geq 1$ if $h(0)=0$.
Therefore, the supremum $M$ is also the limit point of $\{b_{n}\}_{n\geq 1}$ when $h(0)=0$.

On the basis of the above findings we offer the following Lemma which will be used in the construction of  the subsequence of $\{b_{n}\}_{n\geq 1}$  whose limit exactly belongs to $(m,M)$.

\begin{lem}
For any $\gamma\in(m,M)$, there exists an integer $n_1\geq s^{k_0}$ such that $b_{n_1+1}<\gamma\leq b_{n_1}$.
\end{lem}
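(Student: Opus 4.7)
The plan is to locate $n_1$ as the last index in a carefully chosen finite window on which $b_n \geq \gamma$, exploiting the fact—already established earlier in Section \ref{sec dense}—that both $m$ and $M$ are accumulation points of $\{b_n\}_{n\geq 1}$.

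First I would produce a large index $n^{\ast} \geq s^{k_0}$ with $b_{n^{\ast}} \geq \gamma$. Since $\gamma < M$ and $M$ is an accumulation point of $\{b_n\}_{n\geq 1}$ (by the arguments given just above the lemma: when $h(0)\neq 0$ we have $b_n < b_{sn}$ so $M = \lim b_{s^k n}$ for some $n$, and when $h(0)=0$ we have $M = \lim_k b_{s^k n_0}$), there exist arbitrarily large indices with $b_n$ as close to $M$ as we wish, so we can pick $n^{\ast} \geq s^{k_0}$ with $b_{n^{\ast}} \geq \gamma$. Next, using $\gamma > m$ together with the fact that $m$ is also an accumulation point (by Proposition \ref{pro m}, iterating $b_{s^\ell n + s^\ell - 1} < b_n$ produces indices realizing values arbitrarily close to $m$), I would pick $n^{\ast\ast} > n^{\ast}$ with $b_{n^{\ast\ast}} < \gamma$.

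With these two anchors in place, set
\[
n_1 := \max\bigl\{\, n \in \mathbb{Z} \,:\, n^{\ast} \leq n \leq n^{\ast\ast} - 1,\ b_n \geq \gamma\, \bigr\}.
\]
The set on the right is nonempty since it contains $n^{\ast}$, and is bounded, so the maximum exists and satisfies $n_1 \geq n^{\ast} \geq s^{k_0}$ and $b_{n_1} \geq \gamma$. It remains to verify $b_{n_1 + 1} < \gamma$. If $n_1 + 1 = n^{\ast\ast}$, this is immediate from the choice of $n^{\ast\ast}$. Otherwise $n_1 + 1 \in [n^{\ast} + 1, n^{\ast\ast} - 1]$, and maximality of $n_1$ forces $b_{n_1 + 1} < \gamma$. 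Either way $b_{n_1 + 1} < \gamma \leq b_{n_1}$, which is the required statement.

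There is essentially no hard step: the argument is a pigeonhole on a finite window. The only thing that has to be acknowledged is the two limit-point facts for $m$ and $M$, and these are already furnished by Propositions \ref{pro m} and \ref{pro mon2} and the discussion immediately preceding the lemma, so no additional estimate is needed to complete the proof.
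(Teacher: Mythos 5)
Your proof is correct and rests on exactly the same two inputs as the paper's, namely that $M$ being an accumulation point (or attained) supplies an index $n^{*}\geq s^{k_0}$ with $b_{n^{*}}\geq\gamma$ and that $m$ being an accumulation point supplies a later index with $b_n<\gamma$; your ``last crossing in a finite window'' is just the direct form of the paper's argument, which reaches the same conclusion by contradiction (assuming no downward crossing exists and deducing that $b_n\geq\gamma$ for all large $n$). No gap; the direct phrasing is if anything slightly cleaner.
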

\begin{proof}
Suppose the result does not hold, then for any $n\geq s^{k_0}$, $ b_{n+1}\geq \gamma$ or $b_{n}<\gamma$.
At this moment, for any $n>s^{k_0}$, $b_n=b_{(n-1)+1}<\gamma$ implies $b_{n-1}<\gamma$,
and $b_n\geq\gamma$ implies $b_{n+1}\geq\gamma$. Thus for any $n\geq s^{k_0}$,
\begin{itemize}
  \item If $b_n<\gamma$, then for any integer $m\in[s^{k_0},n]$ one has $b_m<\gamma$.
  \item If $b_n\geq \gamma$, then for any integer $m\geq n$ one has $b_m\geq \gamma$.
\end{itemize}
Put \[m_0:=\min\{n\geq s^{k_0}: b_n\geq \gamma\}.\]
It is well defined since $\gamma<M$ and $M$ is the limit point of  $\{b_{n}\}_{n\geq 1}$  imply $\{n\geq s^{k_0}: b_n\geq \gamma\}\neq\emptyset$.

Therefore, for any $n\geq m_0$, one has $b_n\geq \gamma$, which is contradict with $\gamma>m$ and $m$
is also the limit point of  $\{b_{n}\}_{n\geq 1}$.
\end{proof}

Now, we will show that for any point $\gamma\in(m,M)$, it is the limit point of  $\{b_{n}\}_{n\geq 1}$. That is to say, there is a subsequence $\{n_k\}_{k\geq 1}$ such that
\[\lim_{k\to\infty} b_{n_k}=\gamma.\]

Take $n_1\geq s^{k_0}$ with $b_{n_1+1}<\gamma\leq b_{n_1}$. And define $n_{k+1}$ recursively as follows.

\[
n_{k+1}=\left\{
  \begin{array}{ll}
    sn_k+i, & \hbox{if~~ $b_{sn_k+(i+1)}\textless\gamma\le b_{sn_k+i}$ ~~for some~~ $i\in \{0,1,\ldots,s-2\}$;} \\
    sn_k+(s-1), & \hbox{if~~ $b_{sn_k+(s-1)}\ge\gamma$.}
  \end{array}
\right.\]

At first, we will prove that the limit $\lim\limits_{k\to\infty}b_{n_k}$  exists. To do that, we just need to show that $ \sum\limits_{k=1}^{\infty}(b_{n_k}-b_{n_{k+1}})$ is convergent.  It is sufficient to show that  $\sum\limits_{k=1}^{\infty}|b_{n_k}-b_{n_{k+1}}|<\infty$.
		
Suppose $n_{k+1}=sn_k+i$ for some $i\in \{0,1,\ldots,s-1\}$.

If $i\in \{1,\ldots,s-1\}$, by Proposition \ref{pro mon2}, we have
\[
|b_{n_k}-b_{n_{k+1}}|=b_{n_k}-b_{n_{k+1}}
=\frac{a_{n_k}}{{n_k}^{\log_s p}}-\frac{p a_{n_k}+ h(i)}{(s n_k+ i)^{\log_s p}}
=\frac{a_{n_k}}{{n_k}^{\log_s p}}\cdot \left(1-\frac{1+\frac{h(i)}{pa_{n_k}}}{(1+\frac{i}{sn_k})^{\log_s p}}\right).
\]	
Note that
\[1-\frac{1+\frac{h(i)}{pa_{n_k}}}{(1+\frac{i}{sn_k})^{\log_s p}}
<\left(1+\frac{i}{sn_k}\right)^{\log_s p}-1-\frac{h(i)}{pa_{n_k}}
<\left(1+\frac{i}{sn_k}\right)^{\log_s p}-1.\]		
And
\[\left(1+\frac{i}{sn_k}\right)^{\log_s p}-1
=\log_s p \cdot \left(1+\frac{\theta i}{sn_k}\right)^{\log_s p-1}\cdot \frac{i}{sn_k}
<\log_s p\cdot \frac{p}{s}\cdot \frac{1}{n_k}\leq \frac{p\log_s p}{s^k},\]
since $\theta\in(0,1)$ implies $1+\frac{\theta i}{sn_k}<s$, and $n_k\ge s^{k-1}$.
Thus at this case,
\[
|b_{n_k}-b_{n_{k+1}}|<M\cdot\frac{p\log_s p}{s^k}=:Cs^{-k},
\]
where $C:=M p\log_s p$.

If $i=0$, by Proposition \ref{pro mon2}, we also have
\[
|b_{n_k}-b_{n_{k+1}}|=b_{n_{k+1}}-b_{n_k}=\frac{h(0)}{p{n_k}^{\log_s p}}\leq \frac{h(0)}{ps^{(k-1)\log_s p}}= \frac{h(0)}{p^k}<Cs^{-k},
\]
since $p>s$, $0\leq h(0)<p$ and $M\geq b_1=\frac{a_1}{1^{\log_s p}}=h(1)\geq 1$.
Therefore,
\[
\sum_{k=1}^{\infty}|b_{n_k}-b_{n_{k+1}}|<\sum_{k=1}^{\infty}Cs^{-k}=\frac{C}{s-1}<\infty.
\]

It is obviously that  $\lim\limits_{k\to\infty}b_{n_k}\ge\gamma$. It can be seen, the set $\{k\geq 1:n_k\not\equiv s-1(\text{mod  s} )\}$ is an infinite set.

If not, put $K=\max\{k\geq 1:n_k\not\equiv s-1(\text{mod  s} )\}$ if $\{k\geq 1:n_k\not\equiv s-1(\text{mod  s} )\}\neq \emptyset$ and $1$ otherwise. Then $b_{n_K+1}<\gamma\leq b_{n_K}$, and $n_k\equiv s-1(\text{mod  s} )$ for any $k>K$. Thus
  \[
  \lim_{k\to\infty}b_{n_k}
  =\lim_{\ell\to\infty}\frac{p^\ell a_{n_K}+h(s-1)\frac{p^\ell -1}{p-1}}{(s^\ell n_K+s^\ell-1)^{\log_s p}}
  =\frac{a_{n_K}+\frac{h(s-1)}{p-1}}{(n_K+1)^{\log_s p}}
  \leq \frac{a_{n_K}+1}{(n_K+1)^{\log_s p}}
  \leq b_{n_K+1}<\gamma,
  \]
which contradicts with $\lim\limits_{k\to\infty}b_{n_k}\ge\gamma$.

Now, we show that $\lim\limits_{k\to\infty}b_{n_k}\le\gamma$. Note that $n_k\not\equiv s-1(\text{mod  s} )$ for infinitely many $k\geq 1$.
One has $b_{sn_k+s-1}=\frac{pa_{n_k}+h(s-1)}{(sn_k+s-1)^{\log_s p}}<\gamma$ for infinitely many $k$.
We denote the set of such $k$ by $\mathcal{K}$, thus
  \[\lim_{k\to \infty}b_{n_k}
  =\lim_{k\in\mathcal{K},k\to \infty}\frac{a_{n_k}}{n_k^{\log_s p}}
  =\lim_{k\in\mathcal{K},k\to \infty}\frac{pa_{n_k}+h(s-1)}{(sn_k+s-1)^{\log_s p}}
  \leq \gamma,\]
  since $a_{n_k}\to\infty, n_k\to\infty$ as $k\to\infty$.	
In conclusion, we have $\lim\limits_{k\to\infty}b_{n_k}=\gamma$.

\section{The distribution of $\{b_n\}_{n\geq 1}$}\label{Sec distribution}
In this section, we shall come to the distribution of the sequence $\{b_n\}_{n\geq 1}$. For better research,  let us start by expanding the domain of the arithmetical function $b_n$ to positive real number. Just like what we have done in the proof of the dense of $\{b_n\}_{n\geq 1}$, if the subsequence $\{n_k\}_{k\geq 1}$ satisfies $n_{k+1}=sn_k+i$ for any $k\geq 1$ and some $i\in\{0,1,\ldots,s-1\}$, then $\lim\limits_{k\to\infty}b_{n_k}$ exists. So we can define the function
\[\lambda(x):=\lim_{k\to\infty}\frac{a(s^kx)}{(s^kx)^{\log_s p}}\]
for any $x\in(0,\infty)$,  where $a(x):=a_{[x]}$. It is clear that $a(x)=0$ for any $x\in(0,1)$. At first, we will prove strictly that $\lambda(x)$ is well-defined, rewrite the express of $\lambda(x)$ and estimate the convergence speed of the limit.

\begin{prop}\label{pro lam}
For any  $x>0$, the limit $\lim\limits_{k\to \infty}\frac{a(s^kx)}{(s^kx)^{\log_s p}}$ exists, $\lambda(sx)=\lambda(x)$, and $\lambda(x)\in[m,M]$.
Furthermore, the set $\{\lambda(x): x\in[s^{-1},1)\}$ is dense in $[m,M]$.
\end{prop}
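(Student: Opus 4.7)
My plan is to reuse the telescoping estimate $|b_{n_k} - b_{n_{k+1}}| = O(s^{-k})$ established in Section \ref{sec dense}, which is valid precisely when $n_{k+1} = s n_k + i_k$ with $i_k \in \{0,1,\ldots,s-1\}$. For fixed $x > 0$, set $n_k := [s^k x]$ for $k$ large enough that $s^k x \geq 1$. Writing $s^{k+1}x = s n_k + s\{s^k x\}$ (with $\{\cdot\}$ the fractional part), we get $n_{k+1} = s n_k + i_k$ where $i_k = [s\{s^k x\}] \in \{0,1,\ldots,s-1\}$, so the Section \ref{sec dense} bound applies (up to a factor depending on $x$, since $n_k \geq s^k x / 2$ eventually), and $\{b_{n_k}\}$ is Cauchy. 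Because
\[
\frac{a(s^k x)}{(s^k x)^{\log_s p}} = b_{n_k}\cdot\left(\frac{n_k}{s^k x}\right)^{\log_s p}
\]
and $n_k/(s^k x) \to 1$, the defining limit exists; since each $b_{n_k} \in [m,M]$, the value $\lambda(x)$ lies in $[m,M]$.

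The functional equation $\lambda(sx) = \lambda(x)$ is immediate by reindexing: the sequence $[s^k(sx)] = [s^{k+1} x]$ is a tail of the sequence used to define $\lambda(x)$, and the two limits agree.

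For the density claim, I evaluate $\lambda$ on the rationals $x_n := n/s^{\ell}$ where $s^{\ell-1} \leq n < s^\ell$; these all lie in $[s^{-1},1)$. For $k \geq \ell$, $s^k x_n = n s^{k-\ell}$ is an integer, and iterating the recursion $a_{sm} = p a_m + h(0)$ yields
\[
a_{n s^{j}} = p^{j} a_n + h(0)\cdot\frac{p^{j} - 1}{p-1}\qquad (j \geq 0).
\]
Dividing by $(n s^{j})^{\log_s p} = n^{\log_s p}\, p^{j}$ and letting $j \to \infty$ gives the clean identity
\[
\lambda(x_n) = b_n + \frac{h(0)}{(p-1)\, n^{\log_s p}}.
\]
The correction term vanishes as $n \to \infty$, so for any $\alpha \in [m,M]$, the subsequence $n_k \to \infty$ with $b_{n_k} \to \alpha$ produced in the proof of Theorem \ref{thm dense} (together with the separate discussion of the endpoints $m$ and $M$) satisfies $\lambda(x_{n_k}) \to \alpha$, and density follows.

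The main obstacle is conceptual rather than computational: one must identify a natural family $\{x_n\} \subset [s^{-1},1)$ on which $\lambda$ can be evaluated in closed form and then tied back to the already-understood $\{b_n\}$. The choice $x_n = n/s^{\ell(n)}$ is exactly what forces $s^k x_n$ to be integral for large $k$, cutting off the fractional-part ambiguity and producing the clean $h(0)$-correction formula above; everything else is routine bookkeeping with the Cauchy estimate from Section \ref{sec dense}.
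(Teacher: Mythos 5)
Your proposal is correct and follows essentially the same route as the paper: existence of the limit via $n_k=[s^kx]$, $n_{k+1}=sn_k+i_k$ and the summable telescoping bound from Section~\ref{sec dense}, self-similarity by reindexing, and density via Theorem~\ref{thm dense}. Your explicit closed form $\lambda(n/s^{\ell})=b_n+\tfrac{h(0)}{(p-1)\,n^{\log_s p}}$ just makes concrete the step the paper leaves as ``follows from (\ref{4}), self-similarity and Theorem~\ref{thm dense}'' (and it agrees with Proposition~\ref{pro rewrite} applied to integer arguments), so no gap.
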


\begin{proof}
For any $x>0$, suppose $x=[x]+[0.d_1 d_2 \ldots]_s$. Write $n_k=[s^k x]$. It is clear that $n_{k+1}=sn_k+d_k$ for any $k\geq 1$.
Thus,
\begin{equation}\label{4}
\lim_{k\to \infty}\frac{a(s^kx)}{(s^kx)^{\log_s p}}
=\lim_{k\to \infty}\frac{a_{n_k}}{n_k^{\log_s p}}\cdot\left(\frac{[s^kx]}{s^kx}\right)^{\log_s p}
=\lim_{k\to \infty}b_{n_k}.
\end{equation}
That is to say, the function $\lambda$ is well defined. The self-similarity of $\lambda$ can be obtained by the definition of itself. The boundedness of $\lambda$ and the denseness of $\{\lambda(x): x\in[s^{-1},1)\}$ follow equality (\ref{4}), the self-similarity of $\lambda$ and Theorem \ref{thm dense}.
\end{proof}

\begin{prop}\label{pro rewrite}
For $x\geq s^{-1}$, suppose $x=[x]+[0.d_1 d_2 \cdots]_s$ and let $\phi(x)=\sum_{j=1}^{\infty}{h(d_j)p^{-j}}$. Then
\[
\lambda(x)=\frac{a(x)+\phi(x)}{x^{\log_s p}}.
\]
And
\[
\left|\lambda(x)-\frac{a(s^kx)}{(s^k x)^{\log_s p}}\right|\le p^{-k}x^{-\log_s p}.
\]
for any $k\geq 1$.
\end{prop}

\begin{proof}
For $x\geq s^{-1}$ and $k\geq 1$, let $n_k:=[s^k x]$. Then $n_k=[x]s^k+\sum_{j=1}^k{d_j s^{k-j}}$, and
\[a(s^kx)=a_{n_k}=p^ka(x)+\sum_{j=1}^k{h(d_j)p^{k-j}}.\]
By the equality (\ref{4}),   we have
\[\lambda(x)
=\lim_{k\to \infty}\frac{p^ka(x)+\sum_{j=1}^k{h(d_j)p^{k-j}}}{(s^k x)^{\log_s p}}
=\lim_{k\to \infty}\frac{a(x)+\sum_{j=1}^k{h(d_j)p^{-j}}}{x^{\log_s p}}
=\frac{a(x)+\phi(x)}{x^{\log_s p}}.\]
It is clearly that $\left|\lambda(x)-\frac{a(x)}{x^{\log_s p}}\right|=\frac{\phi(x)}{x^{\log_s p}}\leq \frac{1}{x^{\log_s p}}$. Note that $\lambda(sx)=\lambda(x)$, then
\[\left|\lambda(x)-\frac{a(s^kx)}{(s^kx)^{\log_s p}}\right|=\left|\lambda(s^kx)-\frac{a(s^kx)}{(s^kx)^{\log_s p}}\right|
\le\frac{1}{(s^kx)^{\log_s p}}=p^{-k}x^{-\log_s p}.\]
\end{proof}

Now, let us  further explore the continuity of $\lambda(x)$.

\begin{prop}\label{prop_con}
The function $\lambda$ is always continuous from the right at any  $x>0$ and continuous from the left at $s$-ary irrational number $x>0$. $\lambda$ is continuous from the left at the $s$-ary rational number $x=[0.d_1\ldots d_N]$ with $d_N\in \{1,\ldots, s-1\}$ for some positive integer $N$ if and only if $h(s-1)= p-1$ and $h(d_N)-h(d_N-1)=1$.

Especially, if $h(x)=x+r$ for some positive integer $r$, and $s+r=p$, that is to say, $A=\{r,r+1,\ldots,p-1\}$, then the corresponding function $\lambda$ is continuous at any $x>0$.
\end{prop}

\begin{proof}
By the self-similarity of $\lambda$, we only need to consider $x\in [s^{-1},1)$.
Note that
\[\lambda(x)=\frac{\phi(x)}{x^{\log_s p}} ~\text{for any}~  x\in[s^{-1},1).\]
It is sufficient to consider the continuity of $\phi(x)=\sum\limits_{j=1}^{\infty}{h(d_j)p^{-j}}$ for $x= [0.d_1 d_2 \ldots]_s$ with $d_1\geq 1$.

For any $x\in[s^{-1}, 1)$, assume that $x= [0.d_1 d_2 \ldots]_s$ with $d_1\geq 1$. For any $n\geq 1$, take
\[x_n=[0.d_1\ldots d_n]_s+\frac{1}{s^n}.\]
 It is clear that $x<x_n$ and $\lim\limits_{n\to \infty}x_n=x$.
 And for any $x^*\in(x,x_n)$, suppose $x^*=[0. d_1^* d_2^*\ldots]_s$, then $d_k^*=d_k$ for any $1\leq k \leq n$. Thus,
\[|\phi(x^*)-\phi(x)|=\left|\sum\limits_{k=n+1}^{\infty}{h(d_k^*)p^{-k}}-\sum\limits_{k=n+1}^{\infty}{h(d_k)p^{-k}}\right|\leq \frac{1}{p^n}.\]
Therefore, $\lambda$ is continuous from the right at any  $x\in[s^{-1},1)$.

For any $s$-ary irrational number $x\in[s^{-1},1)$, suppose $x=[0.d_1d_2\ldots]_s$ with $d_1\geq 1$.
Set $x_n:=[0.d_1\ldots d_n]_s$. For any $x^*\in(x_n, x)$ with $x^*=[0. d_1^* d_2^*\ldots]_s$, one has $d_k^*=d_k$ for any $1\leq k \leq n$.
Similar to the discussion above, $|\phi(x^*)-\phi(x)|\leq \frac{1}{p^n}.$
Therefore, $\lambda$ is continuous from the left at any $s$-ary irrational $x\in[s^{-1},1)$.

For any $s$-ary  rational number $x\in[s^{-1},1)$. Suppose $x=[0.d_1\ldots d_N]_s$ for some  positive integer $N$ with $d_1, d_N\neq 0$.
For any integer $n>N$,  set
\[x_n:=x-s^{-n}=[0.d_1\ldots d_{N-1}(d_N-1)(s-1)^{n-N}]_s.\]
Then $x_n\to x$ as $n\to \infty$, and
\[
|\phi(x_n)-\phi(x)|=\phi(x)-\phi(x_n)
=\frac{h(d_N)-h(d_N-1)}{p^N}-\frac{h(s-1)}{p-1}\left(\frac{1}{p^N}-\frac{1}{p^n}\right).
\]
Note that\\
 If $h(s-1)\neq p-1$, then $|\phi(x_n)-\phi(x)|>p^{-(N+1)}$.\\
 If $h(s-1)= p-1, h(d_N)-h(d_N-1)\neq 1$, then $|\phi(x_n)-\phi(x)|>p^{-N}$.\\
 If $h(s-1)= p-1, h(d_N)-h(d_N-1)=1$, then $|\phi(x_n)-\phi(x)|=p^{-n}$, and thus for any $x^*\in (x_n,x)$, $|\phi(x^*)-\phi(x)|<p^{-n}$.

Therefore, the results hold.
\end{proof}

\begin{prop} \label{prop lam}
The set $\{\lambda(x): x\in [s^{-1},1) ~\text{and}~ x ~\text{is}~ s\text{-ary irrational number}\}$ is dense in $[m,M]$.
Hence, the set $\{\lambda(x): x\in [s^{-1},1) ~\text{and}~ \lambda ~\text{is continuous at}~ x\}$ is dense in $[m,M]$.
\end{prop}

\begin{proof}
For any $\gamma \in [m,M]$ and any $\delta>0$.
By Theorem \ref{thm dense}, there exists $n_k$ such that
\[\frac{M p^2}{n_k}<\delta \qquad \text{and} \qquad \left|\frac{a_{n_k}}{n_k^{\log_s p}}-\gamma\right|<\frac{\delta}{2}.\]
Suppose the $s$-ary expansion of $n_k$ is $n_k=[\varepsilon_{\ell_k} \ldots \varepsilon_1 \varepsilon_0]_s$. Take
$x_k=[0.\varepsilon_{\ell_k} \ldots \varepsilon_1 \varepsilon_0 (01)^\infty]_s$.
Then $x_k$ is $s$-ary irrational number which belongs to $[s^{-1},1)$. Note that
\[\frac{a_{n_k}}{n_k^{\log_s p}}=\frac{a(s^{\ell_k+1}x_k)}{(s^{\ell_k+1}x_k)^{\log_s p}}\cdot\left(1+\log_s p\cdot \left(1+\frac{\frac{\theta}{s^2-1}}{n_k}\right)^{\log_s p-1}\frac{\frac{1}{s^2-1}}{n_k}\right),\]
for some $\theta\in(0,1)$. Then
\[\left|\lambda(x_k)-\frac{a_{n_k}}{n_k^{\log_s p}}\right|\leq\left|\lambda(x_k)-\frac{a(s^{\ell_k+1}x_k)}{(s^{\ell_k+1}x_k)^{\log_s p}}\right|+\left|\log_s p\cdot \frac{a(s^{\ell_k+1}x_k)}{(s^{\ell_k+1}x_k)^{\log_s p}} \left(1+\frac{\frac{\theta}{s^2-1}}{n_k}\right)^{\log_s p-1}\frac{\frac{1}{s^2-1}}{n_k}\right|.\]
The first part of the right in the above inequality is no more than $p^{-\ell_k}$ by Proposition \ref{pro rewrite}.
And the second part is no more than
\[p\cdot M\cdot \frac{p}{s}\cdot \frac{1}{s^2-1}\cdot\frac{1}{n_k}<\frac{\delta}{6}.\]
Thus,
 \[|\lambda(x_k)-\gamma|<\frac{1}{p^{\ell_k}}+\frac{\delta}{6}+\frac{\delta}{2}\leq\delta,\]
 which implies the proposition is correct.

\end{proof}

\begin{lem}\label{lem bn}
For any interval $(e,f)\subset[m,M]$, there exist $x_0\in[1/s,1]$, $\eta_0\in(0,1)$ and $k_0\in \mathbb{Z}^+$ such that $b_n\in(e,f)$ for any integer $n\in[s^k(x_0-\eta_0),s^k(x_0+\eta_0)]$ with $k>k_0$.
\end{lem}

\begin{proof}
By Proposition \ref{prop lam}, there is  an $s$-ary irrational number $x_0\in(s^{-1},1)$ and real numbers $\widetilde{e}, \widetilde{f}$ such that $e<\widetilde{e}<\lambda(x_0)<\widetilde{f}<f$.
Since $\lambda$ is continuous at $x_0$, there exists $\eta_0\in(0,x_0-s^{-1})$ such that for any $x\in[x_0-\eta_0, x_0+\eta_0]\subset[s^{-1},+\infty)$, one has $\widetilde{e}<\lambda(x)<\widetilde{f}$.

Let $\delta=\min\{f-\widetilde{f},\widetilde{e}-e\}$, and choose $k_0$ so large that $p^{-k_0}(x_0-\eta_0)^{-\log_s p}<\delta$.
Thus for any $k\ge k_0$, by Proposition \ref{pro rewrite}, for any $x\in[x_0-\eta_0, x_0+\eta_0]$, one has
\[
\left|\lambda(x)-\frac{a(s^kx)}{(s^kx)^{\log_s p}}\right|\le p^{-k}x^{-\log_s p}\le p^{-k_0}(x_0-\eta_0)^{-\log_s p}\textless\delta.
\]
Then for any integer $n\in [s^k(x_0-\eta_0), s^k(x_0+\eta_0)]$,
\[
b_n=\frac{a_n}{n^{\log_s p}}
\leq\left|\frac{a_n}{n^{\log_s p}}-\lambda\left(\frac{n}{s^k}\right)\right|+\left|\lambda\left(\frac{n}{s^k}\right)
\right|<\delta+\widetilde{f}\leq f,
\]
and
\[
b_n=\frac{a_n}{n^{\log_s p}}
\geq\left|\lambda\left(\frac{n}{s^k}\right)
\right| - \left|\frac{a_n}{n^{\log_s p}}-\lambda\left(\frac{n}{s^k}\right)\right|>\widetilde{e}-\delta\geq e.
\]
\end{proof}

Now let us give \textbf{the proof of Theorem \ref{thm u.d.}}, show that $\{b_n\}_{n\geq 1}$ is not u. d. mod $1$.
Two situations are analyzed, calculated and discussed according to the position relation between $M$ and $m$.
\begin{proof}
\textbf{Case I: $[M]=[m]$ or $\lceil M\rceil=\lceil m\rceil$.}

Note that for any $n \geq 1$, $\{b_n\}=b_n-[m]\in[\{m\},\{M\}]$ and $[\{m\},\{M\}]\neq [0,1]$, thus $\{b_n\}_{n\geq 1}$ is not u. d. mod $1$.

\textbf{Case II: $[M]\neq[m]$ and $\lceil M\rceil\neq\lceil m\rceil$.}

Assume that the sequence $\{b_n\}_{n\geq 1}$ is u. d. mod $1$. Then for any $\alpha\in(0,1)$, one has
\[\lim_{N\to \infty}\frac{\sharp\{1\leq n\leq N: \{b_n\}\in[0,\alpha] \}}{N}=\alpha.\]
Let $\mathds{1}_E$ be the characteristic function of the set $E$, and $||x||$ be the distance from the real number $x$ to the integers, that is the infimum of $|x-n|$ over all $n\in\mathbb{Z}$. Put
\[\gamma:=\min\left\{\frac{1}{2}\mathds{1}_\mathbb{Z}(m)+||m||, \frac{1}{2}\mathds{1}_\mathbb{Z}(M)+||M||\right\}.\]
It is clear that $0<\gamma\leq \frac{1}{2}$.

For any $\alpha\in(0,\gamma)$, one has
\[ [\lceil m\rceil,\lceil m\rceil+\alpha]\subset[m,M],
\]
 By Lemma \ref{lem bn}, there exist $x_0\in[s^{-1},1), \eta_0\in(0,1), k_0\in\mathbb{Z}^+$ such that for any integer $n\in [s^k(x_0-\eta_0), s^k(x_0+\eta_0)]$ with $k\ge k_0$,  one has $b_n\in(\lceil m\rceil,\lceil m\rceil+\alpha)$, which implies that $\{b_n\}\in(0,\alpha)$.

Note that for any large enough $k$, the number of integers in interval $[s^k(x_0-\eta_0), s^k(x_0+\eta_0)]$ is
\[s^k(x_0+\eta_0)-s^k(x_0-\eta_0)=2s^k\eta_0+O(1).\] Thus
 \[
\sharp\{1\leq n\leq s^k(x_0+\eta_0): \{b_n\}\in[0,\alpha] \}=\sharp\{1\leq n\leq s^k(x_0-\eta_0): \{b_n\}\in[0,\alpha] \}+2 s^k\eta_0+O(1).
\]
 Dividing both sides by $s^k(x_0+\eta_0)$, and letting $k\to\infty$, then gives that
\[\alpha= \frac{x_0-\eta_0}{x_0+\eta_0} \alpha+\frac{2\eta_0}{x_0+\eta_0},\]
which implies $\alpha=1$, contradicts with $0<\alpha<\gamma\leq \frac{1}{2}$. Therefore, $\{b_n\}_{n\geq 1}$ is not u. d. mod $1$.
\end{proof}

Next, recall the definition of the cumulative distribution function in \cite{Brillhart83}.
\begin{dfn}\label{dfn cum}
Let $\{u_n\}_{n\geq 1}$ be a sequence of real numbers contained in an interval $I$. Let $\alpha \in I$  and let $D(x,\alpha)$ denote the number of $1\leq n \le x$ for which $u_n\le\alpha$, i.e.
\[D(x,\alpha)=\sum\limits_{1\le n\le x,u_n\le\alpha}1.\]
If the limit
\[\lim\limits_{x\to\infty}\frac{D(x,\alpha)}{x}=D(\alpha)\]
 exists, then the sequence $\{u_n\}_{n\geq 1}$ is said to have the distribution $D(\alpha)$ at $\alpha$. $D(\alpha)$ is called the cumulative distribution function of $\{u_n\}_{n\geq 1}$.
\end{dfn}

Let us give \textbf{the proof of Theorem \ref{thm cum}}. Show that the cumulative distribution function of the sequence $\{b_{n}\}_{n\geq 1}$ does not exist.

\begin{proof}
Assume that the cumulative distribution function of the sequence $\{b_{n}\}_{n\geq 1}$ exists at some $\alpha\in(m,M)$, denoted by
\[\lim_{x\to\infty}\frac{D(x,\alpha)}{x}=D(\alpha).\]

On the one hand, by Lemma \ref{lem bn}, there exist $x_1\in[s^{-1},1), \eta_1\in(0,1), k_1\in\mathbb{Z}^+$ such that $n\in [s^k(x_1-\eta_1), s^k(x_1+\eta_1)]$ implies that $b_n\in(m,\alpha)$ for any $k\ge k_1$.
Then for any large enough $k$,
\[ D(s^k(x_1+\eta_1),\alpha)=D(s^k(x_1-\eta_1),\alpha)+2 s^k\eta_1+O(1),\]
and thus $D(\alpha)=D(\alpha)\frac{x_1-\eta_1}{x_1+\eta_1}+\frac{2\eta_1}{x_1+\eta_1}$,
which implies $D(\alpha)=1$.

On the other hand, there exist $x_2\in[s^{-1},1), \eta_2\in(0,1), k_2\in\mathbb{Z}^+$ such that  $n\in [s^k(x_2-\eta_2), s^k(x_2+\eta_2)]$ implies that $b_n\in(\alpha,M)$ for any $k\ge k_2$.
Then for any large enough $k$,
\[D(s^k(x_2+\eta_2),\alpha)=D(s^k (x_2-\eta_2),\alpha),\]
and thus $D(\alpha)=D(\alpha)\frac{x_2-\eta_2}{x_2+\eta_2}$,
which implies $D(\alpha)=0$.

This is the contradiction.
And therefore $\lim\limits_{x\to\infty}\frac{D(x,\alpha)}{x}$ does not exist.
\end{proof}

In order to show that the logarithmic distribution function of the sequence $\{b_{n}\}_{n\geq 1}$ does exist, we need the properties of the level set of $\lambda(x)$.

\begin{prop}\label{prop measurezero}
For any $\alpha\in[m,M]$,  the set
\[S_\alpha=\{x>0:~~\lambda(x)=\alpha\}\] has measure zero.
\end{prop}

\begin{proof}
By the self-similarity of $\lambda$, we only need to consider the set of $x\in[s^{-1},1)$ with $\lambda(x)=\alpha$, which is still denoted by $S_\alpha$.
Let $\mathcal{N}:=\{x: x \text{ is normal to base } s\}$.

If $S_{\alpha}\cap \mathcal{N}=\emptyset$, then $S_{\alpha}$ has measure zero, since almost all real numbers (in the sense of Lebesgue measure) are absolutely normal \cite{Kuipers}. The result is correct.

If not, for any $x=[0. d_1 d_2 \ldots]_s\in S_{\alpha}\cap \mathcal{N}$,
there are infinitely many positive integers $n$'s such that $d_n \ldots d_{n+s-1}=0^s$.
That is to say, $\mathcal{I}:=\{n: d_n \ldots d_{n+s-1}=0^s\}$ is an infinite set.
For any  $n\in\mathcal{I}$, take
\[x_n=x+s^{-n}=[0.d_1\ldots d_{n-1}10^{s-1}d_{n+s}\ldots]_s,\]
\[y_n=x+s^{-n}+s^{-(n+s-1)}=[0.d_1\ldots d_{n-1}10^{s-2}1d_{n+s}\ldots]_s.\]
Then for any $x^* =[0.d_1^* d_2^* \ldots]_s\in (x_n, y_n)$, one has $d_1^* \cdots d_{n+s-2}^* =d_1\ldots d_{n-1}10^{s-2}$.
Thus
\begin{align*}
|\phi(x^*)-\phi(x)|&=\left|\frac{h(1)-h(0)}{p^n}+\sum_{k=n+s-1}^\infty\frac{h(d_k^*)-h(d_k)}{p^k}\right|\\
&\leq \frac{h(1)-h(0)}{p^n}+\sum_{k=n+s-1}^\infty\frac{h(s-1)}{p^k}\leq \frac{p-1}{p^n}+\frac{1}{p^{n+s-2}}
\leq \frac{1}{p^{n-1}}.
\end{align*}
It follows from  the above inequality that there exists $N>0$ such that for any $n>N$ with $n\in\mathcal{I}$,
\begin{align*}
|\lambda(x^*)-\lambda(x)|=&\left|\frac{\phi(x^*)}{{x^*}^{\log_s p}}-\frac{\phi(x)}{{x}^{\log_s p}}\right|
=\left|\frac{\phi(x^*)}{{x^*}^{\log_s p}}\left(\frac{{x^*}^{\log_s p}}{x^{\log_s p}}-1\right)-\frac{\phi(x^*)-\phi(x)}{{x}^{\log_s p}} \right|\\
\geq& m \cdot \log_s p \cdot \frac{x^*-x}{x}-\frac{1}{p^{n-1} }\cdot\frac{1}{ x^{\log_s p}}
\geq m s^{-n}-p^{-n+2} >0,
\end{align*}
since $x^*-x>s^{-n}$, $\log_s p>1$, and $x\in[s^{-1},1)$.
That is to say, for sufficiently large  $n\in\mathcal{I}$ and  any $ x^*\in(x_n, y_n)$, one has $\lambda(x^*)\neq\alpha$.
Therefore, when $n\in\mathcal{I}$ large enough,
\[
\frac{1}{s^{-n}+s^{-(n+s-1)}}\mathcal{L}(S_\alpha\cap (x,y_n))
=\frac{1}{s^{-n}+s^{-(n+s-1)}}\mathcal{L}(S_\alpha\cap (x,x_n))
\le\frac{s^{-n}}{s^{-n}+s^{-n-s+1}}=\frac{1}{1+s^{1-s}}<1,
\]
where $\mathcal{L}$ denote the Lebesgue measure.
But at the same time,
\[
\frac{1}{h}\mathcal{L}(S_\alpha\cap (x,x+h))=\frac{1}{h}\int_{x}^{x+h}\mathds{1}_{S_\alpha}(t)dt\to \mathds{1}_{S_\alpha}(x),
\]
as $h\to 0$ for almost all real number $x$, where $\mathds{1}_E(x)=1$ if and only if $x\in E$. Combining the above two conclusions, one has that the set $ S_\alpha\cap \mathcal{N}$ has measure zero.
Therefore
\[
L(S_\alpha)=L(S_\alpha\cap \mathcal{N})+L(S_\alpha\cap \mathcal{N}^c)=0.
\]
\end{proof}

\begin{lem}\label{lem_integral}
For any $\alpha\in[m,M]$,  let $E_\alpha=\{x\in[s^{-1},1): ~\lambda(x)\le\alpha\}$, then $\frac{\mathds{1}_{E_\alpha}(x)}{x}$ is Riemann integral.
\end{lem}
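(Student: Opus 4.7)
The plan is to apply Lebesgue's criterion for Riemann integrability: a bounded function on a closed bounded interval is Riemann integrable if and only if its set of discontinuities has Lebesgue measure zero. Here $\frac{\mathds{1}_{E_\alpha}(x)}{x}$ is clearly bounded on $[s^{-1},1]$ (between $0$ and $s$), and since $\frac{1}{x}$ is continuous and nonvanishing on $[s^{-1},1]$, the set of discontinuities of the product equals the set of discontinuities of $\mathds{1}_{E_\alpha}$. Hence it suffices to prove $\mathcal{L}(\partial E_\alpha)=0$.

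The first step is to locate $\partial E_\alpha$. I would argue that
\[
\partial E_\alpha \subset S_\alpha \cup D,
\]
where $S_\alpha=\{x:\lambda(x)=\alpha\}$ and $D$ is the set of points at which $\lambda$ fails to be continuous. Indeed, suppose $x\in\partial E_\alpha$ and that $\lambda$ is continuous at $x$. Then there are sequences $x_n\to x$ in $E_\alpha$ and $y_n\to x$ in $E_\alpha^c$; passing to the limit gives $\lambda(x)\leq\alpha$ and $\lambda(x)\geq\alpha$, so $\lambda(x)=\alpha$.

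The second step is to estimate the measures of the two pieces. By Lemma \ref{lem measurezero}, $\mathcal{L}(S_\alpha)=0$. By Proposition \ref{pro_con}, $\lambda$ is continuous from the right everywhere and continuous from the left at every $s$-ary irrational point; hence $D$ is contained in the countable set of $s$-ary rationals in $[s^{-1},1)$, so $\mathcal{L}(D)=0$. Combining, $\mathcal{L}(\partial E_\alpha)=0$, and Lebesgue's criterion yields the Riemann integrability of $\mathds{1}_{E_\alpha}(x)/x$ on $[s^{-1},1]$ (the value at the single point $x=1$ is irrelevant).

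I do not foresee a serious obstacle here: the two nontrivial inputs, namely that $\{\lambda=\alpha\}$ is a null set and that $\lambda$ has at most countably many discontinuities, have already been established in the preceding lemma and proposition. The only point requiring a little care is the inclusion $\partial E_\alpha\subset S_\alpha\cup D$, and this follows cleanly from the definition of continuity together with the characterization of $E_\alpha$ by the inequality $\lambda(x)\leq\alpha$.
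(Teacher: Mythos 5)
Your proposal is correct and follows essentially the same route as the paper: both invoke Lebesgue's criterion, reduce to showing that the discontinuity set of $\mathds{1}_{E_\alpha}$ is contained in the union of $S_\alpha$ (null by Lemma \ref{lem measurezero}) and the discontinuity set of $\lambda$ (contained in the countable set of $s$-ary rationals by Proposition \ref{pro_con}). Your phrasing via $\partial E_\alpha$ is a clean, and if anything slightly more careful, version of the paper's argument that $\mathds{1}_{E_\alpha}$ is locally constant near every $s$-ary irrational $x$ with $\lambda(x)\neq\alpha$.
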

\begin{proof}
Since the conclusion is equivalent to that the points in $[s^{-1},1)$ at which $\frac{\mathds{1}_{E_\alpha}(x)}{x}$ fails to be continuous has measure zero.
It is sufficient to show that the set
\[\{x\in [s^{-1},1): ~  \mathds{1}_{E_\alpha}(x) \text{ is  not continuous at } x\}\]
has measure zero.

Let $x\in(s^{-1},1)$ be an $s$-ary irrational number with $\lambda(x)\neq \alpha$. By Proposition \ref{prop_con}, $\lambda$ is continuous at $x$. If $\lambda(x)> \alpha$, there exists $\delta_1>0$, such that for any $y\in(x-\delta_1, x+\delta_1)$, $\lambda(y)> \alpha$, which implies $\mathds{1}_{E_\alpha}(y)=\mathds{1}_{E_\alpha}(x)=0$.  Similarly,  If $\lambda(x)< \alpha$, there exists $\delta_2>0$, such that for any $y\in(x-\delta_2, x+\delta_2)$, $\mathds{1}_{E_\alpha}(y)=\mathds{1}_{E_\alpha}(x)=1$. Thus, $\mathds{1}_{E_\alpha}$ is continuous at $x$. Therefore,
\[\{x\in [s^{-1},1): ~ x \text{ is } s\text{-ary irrational number with } \lambda(x)\neq \alpha\}
\subset\{x\in [s^{-1},1): ~  \mathds{1}_{E_\alpha}(x) \text{ is  continuous at } x\}.\]
The result is true by Proposition \ref{prop measurezero}.
\end{proof}

Now, recall the definition of the logarithmic distribution.
\begin{dfn}\label{dfn log}
Let $\{u_n\}_{n\geq 1}$ be a real sequence contained in an interval $I$. Let $\alpha\in I$ and let
\[
L(x,\alpha)=\sum\limits_{1\le n\le x,u_n\le\alpha}\frac{1}{n}.
\]
If the limit
\[
\lim_{x\to \infty}\frac{1}{\ln x}L(x,\alpha)=L(\alpha)
\]
exists, then the sequence $\{u_n\}_{n\geq 1}$ is said to have the logarithmic distribution $L(\alpha)$ at $\alpha$. $L(\alpha)$ is called the logarithmic distribution function of the sequence $\{u_n\}_{n\geq 1}$.
\end{dfn}
And  complete \textbf{the proof of the Theorem \ref{thm log}}, show that the logarithmic distribution function does exist for the sequence $\{b_n\}_{n\geq 1}$.

\begin{proof}

For any positive integer $k$, let $I_k:=\{n\in\mathbb{Z}^+:s^{k-1}\le n< s^k\}$.
By Proposition \ref{pro rewrite}, for any $n\in I_k$,
\[|\lambda(n)-b_n|=|\lambda(n\cdot s^{-k})-b_n|\le p^{-k}(n\cdot s^{-k})^{-\log_s p}\leq p^{-(k-1)}.\]
Thus
\[
\{n\in I_k:\lambda(n)\leq \alpha-p^{-(k-1)}\}
\subseteq\{n\in I_k:b_n\leq \alpha\}
\subseteq\{n\in I_k:\lambda(n)\leq \alpha+p^{-(k-1)}\},
\]
which implies that
\[\sigma^*_k(\alpha-p^{-(k-1)})\le \sigma_k(\alpha)\le\sigma_k^*(\alpha+p^{-(k-1)}),\]
where
\[\sigma_k(\alpha):=\sum_{n\in I_k,b_n\le\alpha}\frac{1}{n}, \quad
\text{and} \quad \sigma_k^*(\alpha):=\sum_{n\in I_k,\lambda(n)\le\alpha}\frac{1}{n}.\]
We can rewrite $ \sigma_k^*(\alpha)$ as
\[
\sigma_k^*(\alpha)=\sum_{n\in I_k,\lambda(n\cdot s^{-k})\le\alpha}\frac{1}{n}
=\sum\limits_{n=s^{k-1}}^{s^{k}-1}\frac{\mathds{1}_{E_\alpha}(\frac{n}{s^k})}{\frac{n}{s^k}}\cdot \frac{1}{s^k}
.\]
By Lemma \ref{lem_integral},  $\frac{\mathds{1}_{E_\alpha}(x)}{x}$ is Riemann integral, we have
\[
\lim_{k\to\infty}\sigma_k^*(\alpha)=\int_{s^{-1}}^{1}\frac{\mathds{1}_{E_\alpha}(x)}{x}dx=\int_{E_\alpha}\frac{1}{x}dx=:h(\alpha).
\]

Note that
\[\bigcap_{m\geq 1}E_{\alpha+\frac{1}{m}}=E_\alpha, \quad \bigcup_{m\geq 1}E_{\alpha-\frac{1}{m}}=E_{\alpha}-S_{\alpha}.\]
From the continuity of the integration, and Proposition \ref{prop measurezero}, we can know that
\[
\lim_{m\to\infty}\int_{E_{\alpha+\frac{1}{m}}}\frac{1}{x}dx=\int_{E_\alpha}\frac{1}{x}dx
=\int_{E_\alpha-S_\alpha}\frac{1}{x}dx=\lim_{m\to\infty}\int_{E_{\alpha-\frac{1}{m}}}\frac{1}{x}dx.
\]
Therefore  $h(\alpha)$ is continuous, since $h(\alpha)$ is increasing with respect to $\alpha$.

For any fixed positive integer $k_0$, and any integer $k>k_0$,
\[
\sigma_k^*(\alpha-p^{-(k_0-1)})\le\sigma_k^*(\alpha-p^{-(k-1)})\le\sigma_k^*(\alpha+p^{-(k-1)})\le\sigma_k^*(\alpha+p^{-(k_0-1)}).
\]
First, let $k\to\infty$, we can get
\begin{align*}
h(\alpha-p^{-(k_0-1)})\le\lim_{k\to\infty}\inf\sigma_k^*(\alpha-p^{-(k-1)})\le\lim_{k\to\infty}\sup\sigma_k^*(\alpha-p^{-(k-1)})\le h(\alpha+p^{-(k_0-1)}),\\
h(\alpha-p^{-(k_0-1)})\le\lim_{k\to\infty}\inf\sigma_k^*(\alpha+p^{-(k-1)})\le\lim_{k\to\infty}\sup\sigma_k^*(\alpha+p^{-(k-1)})\le h(\alpha+p^{-(k_0-1)}).
\end{align*}
And then let $k_0\to\infty$, by the continuity of $h(\alpha)$, we have
\[
\lim_{k\to\infty}\sigma_k^*(\alpha-p^{-(k-1)})=\lim_{k\to\infty}\sigma_k^*(\alpha+p^{-(k-1)})=h(\alpha).
\]
Therefore,
\[\lim_{k\to\infty} \sigma_k(\alpha)=h(\alpha),\]
 which can in turn
\[
\lim_{m\to\infty}\frac{1}{m}\sum\limits_{k=1}^{m}\sigma_k(\alpha)=h(\alpha).
\]
Then for any $x>0$, choose integer $m$ such that $s^{m-1}\leq x<s^m$,
\[
\lim_{x\to\infty}\frac{1}{\ln x}\sum\limits_{1\leq n\leq x,b_n\le\alpha}\frac{1}{n}
=\lim_{m\to\infty}\frac{1}{m \ln s}\sum\limits_{k=1}^{m}\sigma_k(\alpha)
=\frac{1}{\ln s}h(\alpha).
\]
Thus, the logarithmic distribution function of the sequence $\{b_{n}\}_{n\geq 1}$ exists.
\end{proof}

\section{The supremum and infimum of $\{b_n\}_{n\geq 1}$ corresponding to linear Cantor integers}\label{sec bound}

We begin the proof of Theorem \ref{thm bounded} with the fact $a_n\geq \left(q+\frac{r}{s-1}\right)n$ and some properties of $b_n$.

\begin{prop}\label{pro b^r}
For any non-negative integer $k$ and any integer $n=[\varepsilon_k \varepsilon_{k-1} \ldots \varepsilon_0]_s\in[s^k,s^{k+1})$,
\begin{equation}
b_n=\widetilde{b}_n+r\cdot\frac{p^{k+1}-1}{p-1}\cdot n^{-\log_s p},
\end{equation}
where $\widetilde{b}_n=\frac{\widetilde{a}_n}{n^{log_s p}}$, and ~$\widetilde{a}_n:=[(q\varepsilon_k) (q\varepsilon_{k-1}) \ldots (q\varepsilon_0)]_p$.
\end{prop}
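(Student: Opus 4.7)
The plan is to verify this by direct expansion of $a_n$ using the specific form $h(x) = qx + r$ for the linear Cantor-integer setting.

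First I would recall that for any positive integer $n = [\varepsilon_k \varepsilon_{k-1} \cdots \varepsilon_0]_s$, the definition of the Cantor integer gives
\[
a_n = \sum_{i=0}^{k} h(\varepsilon_i)\, p^i.
\]
Substituting $h(\varepsilon_i) = q\varepsilon_i + r$ splits the sum into two pieces:
\[
a_n = \sum_{i=0}^{k} (q\varepsilon_i)\, p^i + r\sum_{i=0}^{k} p^i = \widetilde{a}_n + r\cdot\frac{p^{k+1}-1}{p-1},
\]
where the first sum is exactly $\widetilde{a}_n = [(q\varepsilon_k)(q\varepsilon_{k-1}) \cdots (q\varepsilon_0)]_p$ as defined in the statement (this interpretation as a base-$p$ expansion is legitimate since $q\varepsilon_i \le q(s-1) < p - r \le p$, so each coefficient is a valid $p$-ary digit), and the second sum is the standard geometric series $1 + p + \cdots + p^k$.

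Dividing both sides by $n^{\log_s p}$ then yields
\[
b_n = \frac{a_n}{n^{\log_s p}} = \frac{\widetilde{a}_n}{n^{\log_s p}} + r\cdot \frac{p^{k+1}-1}{p-1}\cdot n^{-\log_s p} = \widetilde{b}_n + r\cdot\frac{p^{k+1}-1}{p-1}\cdot n^{-\log_s p},
\]
which is exactly the identity claimed. No serious obstacle is expected; the only thing worth noting is the implicit fact that $q\varepsilon_i$ are genuine $p$-ary digits (i.e.\ $q\varepsilon_i \leq p-1$), so that the expression $\widetilde{a}_n$ indeed corresponds to a well-defined $p$-ary numeral. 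This is guaranteed by the hypothesis $p > q + r \geq q(s-1)/(s-1) \cdot 1$ together with $\varepsilon_i \leq s-1$ and $r \leq q-1$, giving $q\varepsilon_i \leq q(s-1) \leq p - 1$ whenever $q + r \leq p$ and $s \leq p/q$ (which follows from $A \subset \{0, 1, \ldots, p-1\}$).
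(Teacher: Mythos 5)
Your proof is correct and is essentially identical to the paper's: both substitute $h(\varepsilon_i)=q\varepsilon_i+r$ into $a_n=\sum_{i=0}^k h(\varepsilon_i)p^i$, split off the geometric series $r\sum_{i=0}^k p^i$, and divide by $n^{\log_s p}$. Your extra remark that $q\varepsilon_i\le q(s-1)\le p-1-r$ (so $\widetilde{a}_n$ is a genuine $p$-ary numeral) is a harmless, correct addition, even though the inequality chain you wrote to justify it is a bit garbled.
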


\begin{proof}
\[b_n=\frac{a_n}{n^{\log_s p}}
=\frac{\sum_{i=0}^k(q\varepsilon_i+r)p^i}{n^{\log_s p}}
=\frac{\widetilde{a}_n+r\frac{p^{k+1}-1}{p-1}}{n^{\log_s p}}
=\widetilde{b}_n+r\cdot\frac{p^{k+1}-1}{p-1}\cdot n^{-\log_s p}.\]
\end{proof}

\begin{prop}\label{pro mon}
Given a positive integer $k$. For any non-negative integer $\ell< k$ and any $\varepsilon_k \ldots \varepsilon_{\ell+1} \in \{0,1,\ldots,s-1\}^{k-\ell}$ with $\varepsilon_k\neq 0$, we have that $b_{[\varepsilon_k \ldots \varepsilon_{\ell+1}\varepsilon_\ell (s-1)^\ell]_s}$ decreases with the increasing of $\varepsilon_\ell$. That is to say,
\begin{equation}\label{equ mon}
b_{[\varepsilon_k \ldots \varepsilon_{\ell+1} (s-1)(s-1)^\ell]_s}<b_{[\varepsilon_k \ldots \varepsilon_{\ell+1} (s-2)(s-1)^\ell]_s}<\cdots <b_{[\varepsilon_k \ldots \varepsilon_{\ell+1} 1 (s-1)^\ell]_s}<b_{[\varepsilon_k \ldots \varepsilon_{\ell+1} 0 (s-1)^\ell]_s}.
\end{equation}
Especially, for any integer $n\geq 1$,
\[b_{s n+{s-1}}< b_{s n+{s-2}}< \cdots < b_{s n+1}< b_n \leq b_{s n}.\]
And $b_{s n}=b_n$ if and only if $r=0$.
\end{prop}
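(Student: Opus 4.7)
The plan is to regard $b_{n_i}$, where $n_i := [\varepsilon_k \cdots \varepsilon_{\ell+1}\, i\, (s-1)^\ell]_s$ for $i \in \{0, 1, \ldots, s-1\}$, as the value at integer $y = i$ of an explicit smooth function $c(y)$ of a real variable $y \geq 0$, and to show $c'(y) < 0$ throughout $y \in [0, s-1]$. Set $N := [\varepsilon_k \cdots \varepsilon_{\ell+1}]_s$ (so $N \geq 1$ since $\varepsilon_k \neq 0$) and $A := a_N$. The linearity $h(j) = qj + r$ together with the recurrence $a_{sm+j} = p a_m + h(j)$ yields
\[
n_i = s^{\ell+1} N + s^\ell i + (s^\ell - 1), \qquad a_{n_i} = p^{\ell+1} A + p^\ell q i + p^\ell r + (q(s-1)+r)\,\frac{p^\ell - 1}{p-1}.
\]
Put $c(y) := (\alpha + \beta y)/(\gamma + \delta y)^{\log_s p}$ with $\beta := q p^\ell$, $\delta := s^\ell$, and $\alpha, \gamma$ the corresponding positive constants (so $c(i) = b_{n_i}$ for every integer $i$). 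A straightforward differentiation shows $c'(y)$ has the same sign as $\beta(\gamma + \delta y) - (\log_s p)\,\delta(\alpha + \beta y)$, a linear expression in $y$ with negative leading coefficient $\beta\delta(1 - \log_s p)$ (since $\log_s p > 1$). It therefore suffices to verify $c'(0) < 0$, i.e.\ $\beta\gamma < (\log_s p)\,\delta\,\alpha$.

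After dividing by $s^\ell p^\ell$, this sufficient condition becomes
\[
q\bigl(sN + 1 - s^{-\ell}\bigr) < (\log_s p)\!\left(pA + r + (q(s-1)+r)\,\frac{1 - p^{-\ell}}{p-1}\right),
\]
which I would establish from the bound $A = a_N \geq (q + r/(s-1))\,N$ recalled at the start of the section: dropping the nonnegative $r$-contributions on the right leaves at least $(\log_s p)\,pqN$, so the margin is at least $qN(p\log_s p - s) - q \geq q(p\log_s p - s - 1)$, and this is strictly positive because $p \geq s+1$ and $\log_s p > 1$ force $p\log_s p \geq (s+1)\log_s(s+1) > s+1$. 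This proves the chain \eqref{equ mon}. Specializing to $\ell = 0$ with $N = n$ recovers the subchain $b_{sn} > b_{sn+1} > \cdots > b_{sn+s-1}$, and the identity $b_{sn} = b_n + r/(sn)^{\log_s p}$ (from $a_{sn} = p a_n + r$) yields $b_n \leq b_{sn}$ with equality iff $r = 0$.

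The last piece of the \textbf{especially} clause is $b_{sn+1} < b_n$ for every $n \geq 1$, equivalent to $p a_n\bigl((1 + 1/(sn))^{\log_s p} - 1\bigr) > q + r$. Using the convexity estimate $(1 + x)^{\log_s p} - 1 \geq (\log_s p)\,x$ and $a_n \geq (q + r/(s-1))\,n$, this reduces to the scalar inequality
\[
(p\log_s p)\bigl(q(s-1) + r\bigr) > s(s-1)(q + r).
\]
The \textbf{main obstacle} is that $\log_s p > 1$ alone is not sharp enough here; one must exploit the linear structure. The constraint $h(s-1) = q(s-1) + r \leq p-1$ combined with $q \geq 2$ forces $p \geq 2s - 1$, and the identity $2\bigl(q(s-1)+r\bigr) - s(q+r) = (s-2)(q - r) \geq 0$ gives $(q+r)/(q(s-1)+r) \leq 2/s$, so the required inequality is implied by $p\log_s p > 2(s-1)$. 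Since $x \log x$ is strictly increasing on $[1, \infty)$, this reduces to the boundary $p = 2s - 1$, where $(2s-1)\log(2s-1) - 2(s-1)\log s = (2s-1)\log(2 - 1/s) + \log s > 0$ is immediate.
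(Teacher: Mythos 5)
Your proof is correct and follows essentially the same route as the paper: interpolate $b_{n_i}$ by a differentiable function of the digit $\varepsilon_\ell$, show its derivative is negative by noting that the sign of the derivative is governed by a linear expression with negative leading coefficient, and then settle the $\ell=0$ endpoint cases separately. The only differences are cosmetic: the paper first strips the $r$-dependence via Proposition \ref{pro b^r} and bounds the sign expression uniformly in $x$ (you keep $r$ and evaluate at $y=0$, which the linearity justifies), and for $b_{sn+1}<b_n$ the paper's scalar endgame is $q^2(s-1)+qr-(q+r)s\geq 0$ via $(q-2)(q+r)\geq 0$, whereas yours is $p\log_s p>2(s-1)$ via $p\geq 2s-1$; both are valid.
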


\begin{proof}
By Proposition \ref{pro b^r}, we only need to show that the inequality (\ref{equ mon}) holds for $\widetilde{b}_n$. That is to say
\begin{equation}\label{equ mon2}
\widetilde{b}_{[\varepsilon_k \ldots \varepsilon_{\ell+1} (s-1)(s-1)^\ell]_s}<\widetilde{b}_{[\varepsilon_k \ldots \varepsilon_{\ell+1} (s-2)(s-1)^\ell]_s}<\cdots <\widetilde{b}_{[\varepsilon_k \ldots \varepsilon_{\ell+1} 1 (s-1)^\ell]_s}<\widetilde{b}_{[\varepsilon_k \ldots \varepsilon_{\ell+1} 0 (s-1)^\ell]_s}.
\end{equation}

Fix $0\leq \ell< k$, write $n^*={[\varepsilon_k\varepsilon_{k-1}\ldots\varepsilon_{\ell+1}]}_s$. Then $n^*\geq 1$ and
$$\widetilde{b}_{[\varepsilon_k \ldots \varepsilon_{\ell+1}\varepsilon_\ell (s-1)^\ell]_s}=\frac{p^{\ell+1}\widetilde{a}_{n^*}+q\varepsilon_\ell p^\ell+\frac{q(s-1)(p^\ell-1)}{p-1}}{(s^{\ell+1}n^*+\varepsilon_\ell s^\ell+s^\ell-1)^{\log_s p}}.$$

For $x\geq 0$, let
\begin{equation*}
f(x)=\frac{p^{\ell+1}\widetilde{a}_{n^*}+q p^\ell x +\frac{q(s-1)(p^\ell-1)}{p-1}}{(s^{\ell+1}n^*+s^\ell x +s^\ell-1)^{\log_s p}}.
\end{equation*}
Now it suffices to show $f^\prime(x)<0$ for any $x>0$.

Through calculation and analysis, one has
\[
\text{sgn} (f^\prime(x))=\text{sgn}\left(q\cdot p^\ell-\log_s p\cdot\frac{p^{\ell+1}\widetilde{a}_{n^*}+q p^\ell x +\frac{q(s-1)(p^\ell-1)}{p-1}}{s n^*+ x +1-s^{-\ell}}\right).
\]

Since
\[\log_s p>1, ~~~~~~~~~~p^{\ell+1}\widetilde{a}_{n^*}+q p^\ell x +\frac{q(s-1)(p^\ell-1)}{p-1}> p^{\ell+1}qn^*+q p^\ell x,\]
and
\[s n^*+ x +1-s^{-\ell}<s n^*+ x +1,\]
implies that \[
q\cdot p^\ell-\log_s p\cdot\frac{p^{\ell+1}\widetilde{a}_{n^*}+q p^\ell x +\frac{q(s-1)(p^\ell-1)}{p-1}}{s n^*+ x +1-s^{-\ell}}<q\cdot p^\ell-\frac{p^{\ell+1}qn^*+q p^\ell x}{s n^*+ x +1}=-\frac{q p^\ell((p-s)n^*-1)}{sn^*+x-1}<0.
\]
Thus, $f^\prime(x)<0$ for any $x>0$. Therefore, the inequality (\ref{equ mon2}) holds.

Take $\ell=0$ in the inequality (\ref{equ mon}),  we have for any integer $n\geq 1$,
\[b_{s n+{s-1}}< b_{s n+{s-2}}< \cdots < b_{s n+1} < b_{s n}.\]
At this time, the inequality
\[b_{s n+1}=\frac{a_{s n+1}}{(s n+1)^{\log_s p}}=\frac{pa_{n}+q+r}{p n^{\log_s p}(1+(sn)^{-1})^{\log_s p}}<\frac{a_{n}}{n^{\log_s p}}=b_n\]
holds if and only if $pa_n\left((1+(sn)^{-1})^{\log_s p}-1\right)>q+r$.
Note that \[(1+(sn)^{-1})^{\log_s p}-1 >\log_s p\cdot (sn)^{-1} > (sn)^{-1},~~~ p>q(s-1), ~~~a_n\geq\left(q+\frac{r}{s-1}\right)n,\]
since $\log_s p>1$. It is sufficient to show that $q^2(s-1)+qr-(q+r)s\geq 0$, which follows from
\[q^2(s-1)+qr-(q+r)s=(q^2-q-r)s-q^2+qr\geq 2(q^2-q-r)-q^2+qr=(q-2)(q+r)\geq 0.\]
By combining this with Proposition \ref{pro mon2}, one has the results.
\end{proof}

It is important to notice that $b_{[\varepsilon_k (s-1)^k]_s}$ is not necessarily monotonous  with respect to $\varepsilon_k$. But it also get the minimal value at $\varepsilon_k=s-1$.

\begin{prop}\label{pro min}
For any $k\geq 0$, one has
\begin{equation}\label{equ min}
\min \left\{b_{[\varepsilon_k (s-1)^k]_s}: \varepsilon_k\in\{1,\ldots, s-1\}\right\}= b_{[(s-1) (s-1)^k]_s}.
\end{equation}

Especially, $b_{s-1}\leq b_{s-2}\leq \cdots \leq b_1$.
\end{prop}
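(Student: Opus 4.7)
The plan is to extend $b_{[\varepsilon_k(s-1)^k]_s}$ to a continuous function of a real parameter and prove minimality via a convexity argument, then handle the ``especially'' monotonicity separately with Bernoulli's inequality. Write $\alpha=\log_s p$, $a=q(s-1)+r$, $T_k=a(p^k-1)/(p-1)$, and define
$$g(x)=\frac{(qx+r)p^k+T_k}{\bigl((x+1)s^k-1\bigr)^\alpha},\qquad x\in[0,s-1],$$
so that $g(\varepsilon_k)=b_{[\varepsilon_k(s-1)^k]_s}$ when $\varepsilon_k$ is an integer. A direct calculation gives $g(s-1)=a(p^{k+1}-1)/[(p-1)(s^{k+1}-1)^\alpha]$.

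Next I introduce
$$L(x)=g(s-1)\cdot\bigl((x+1)s^k-1\bigr)^\alpha-(qx+r)p^k-T_k,$$
so that $L(x)\le0$ is equivalent to $g(s-1)\le g(x)$. By construction $L(s-1)=0$, and since $\alpha>1$ one has $L''(x)=g(s-1)\,\alpha(\alpha-1)\,s^{2k}\bigl((x+1)s^k-1\bigr)^{\alpha-2}>0$ on $(0,s-1]$, so $L$ is (strictly) convex on $[0,s-1]$. A convex function on a closed interval attains its maximum at an endpoint, so it suffices to show $L(0)\le0$; this will give $L(x)\le\max(L(0),L(s-1))=0$ on $[0,s-1]$, and in particular on $\{1,\dots,s-1\}$, proving (\ref{equ min}).

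The technical core is therefore $L(0)\le0$. After clearing $(p-1)$, this amounts to
$$a(p^{k+1}-1)(s^k-1)^\alpha\le\bigl[rp^k(p-1)+a(p^k-1)\bigr](s^{k+1}-1)^\alpha.$$
For $k=0$ the left side vanishes and $L(0)=-r\le0$. For $k\ge1$ I split on $r$. If $r\ge1$, the crude bound $\bigl((s^k-1)/(s^{k+1}-1)\bigr)^\alpha\le1/s^\alpha=1/p$ (from $s(s^k-1)\le s^{k+1}-1$) together with $a\le p-1$ reduces the inequality to $a\le rp^{k+1}$, which holds since $a\le p-1<p\le rp^{k+1}$. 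If $r=0$, the inequality simplifies to $(p^{k+1}-1)/(p^k-1)\le\bigl((s^{k+1}-1)/(s^k-1)\bigr)^\alpha$; writing the right side as $p\bigl(1+(s-1)/[s(s^k-1)]\bigr)^\alpha$ and applying Bernoulli's inequality $(1+t)^\alpha\ge1+\alpha t$ reduces it to $\alpha\ge s(p-1)(s^k-1)/[p(s-1)(p^k-1)]$, and the term-by-term comparison $1+s+\cdots+s^{k-1}\le1+p+\cdots+p^{k-1}$ gives $(s^k-1)/(p^k-1)\le(s-1)/(p-1)$, so the right side is bounded by $s/p<1<\alpha$.

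Finally, the ``especially'' statement $b_{s-1}\le b_{s-2}\le\cdots\le b_1$ is a $k=0$ monotonicity assertion not implied by the convexity argument (which only identifies the global minimum). I would prove it directly: $b_i=(qi+r)/i^\alpha$, and $b_i\ge b_{i+1}$ is equivalent to $(1+1/i)^\alpha\ge1+q/(qi+r)$; Bernoulli gives $(1+1/i)^\alpha\ge1+\alpha/i$, and $\alpha/i\ge q/(qi+r)$ reduces to $(\alpha-1)qi+\alpha r\ge0$, which holds since $\alpha>1$ and $r\ge0$. The main obstacle will be the $r=0$ subcase of the $L(0)\le0$ verification: there is no slack from $r$ to absorb the estimate, and one must combine a sharp Bernoulli bound with the structural inequality $\log_s p\ge s(p-1)/[p(s-1)]$ hidden in the geometric-sum comparison.
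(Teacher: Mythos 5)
Your argument is correct, and it takes a genuinely different route from the paper's. The paper first uses Proposition \ref{pro b^r} to strip off the $r$-dependence and work with $\widetilde b$; it then parametrizes $\widetilde b_{[\varepsilon_k(s-1)^k]_s}=f(\varepsilon_k)$, shows $\mathrm{sgn}(f')$ equals the sign of a \emph{decreasing linear} function of $x$ (so $f$ is unimodal and its minimum on $[1,s-1]$ sits at an endpoint), and then does the real work in proving $f(1)\ge f(s-1)$ via a rather delicate bound on $2^{\log_s p}$. You instead keep $r$, pass to the auxiliary function $L(x)=g(s-1)\bigl((x+1)s^k-1\bigr)^\alpha-(qx+r)p^k-T_k$, and use its convexity ($\alpha>1$) to reduce everything to the single endpoint check $L(0)\le 0$ at the artificial point $x=0$ rather than comparing the two genuine endpoints $x=1$ and $x=s-1$. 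That check is substantially lighter than the paper's: trivial for $k=0$, a one-line estimate using $a\le p-1\le rp^{k+1}$ for $r\ge1$, and a short Bernoulli-plus-geometric-sum comparison for $r=0$ (which is exactly the $\widetilde b$ case the paper labors over). Your convexity route buys a cleaner endpoint computation at the cost of having to extend $g$ below the actual range of $\varepsilon_k$; the paper's quasi-concavity route stays on $[1,s-1]$ but pays for it in the $f(1)$ versus $f(s-1)$ comparison. Finally, you correctly note that the minimum statement does not by itself yield $b_{s-1}\le\cdots\le b_1$ and supply a direct Bernoulli argument for it; the paper leaves this step implicit (for $k=0$ one has $b_i=qi^{1-\alpha}+ri^{-\alpha}$, visibly decreasing), so your explicit treatment is a welcome addition rather than a divergence.
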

\begin{proof}
By Proposition \ref{pro b^r}, we only need to show that the inequality (\ref{equ min}) holds for $\widetilde{b}_n$. That is to say
\[\widetilde{b}_{[\varepsilon_k (s-1)^k]_s}\geq \widetilde{b}_{[(s-1) (s-1)^k]_s} \text{ for any } \varepsilon_k\in\{1,\ldots, s-1\}.\]
Since $$\widetilde{b}_{[\varepsilon_k (s-1)^k]_s}=\frac{q\varepsilon_k p^k+\frac{q(s-1)(p^k-1)}{p-1}}{(\varepsilon_k s^k+s^k-1)^{\log_s p}}.$$
We consider the function
\begin{equation*}
f(x):=\frac{q p^k x+\frac{q(s-1)(p^k-1)}{p-1}}{(s^k x  +s^k-1)^{\log_s p}},
\end{equation*} which is defined for any $x\geq 1$.
Through simple calculation, we have
\[\text{sgn}(f^\prime(x))=\text{sgn}\left(p^k(s^k x+s^k-1)-\log_s p\cdot s^k\left(p^k x+\frac{(s-1)(p^k-1)}{p-1}\right)\right).\]
Note that
\[p^k(s^k x+s^k-1)-\log_s p\cdot s^k\left(p^k x+\frac{(s-1)(p^k-1)}{p-1}\right)\]
is a linear function with respect to $x$, and the coefficient of $x$ is $-p^ks^k(\log_s p-1)$, which is less than zero.
Thus, \[\min\{f(x):1\leq x \leq s-1\}=\min\{f(1),f(s-1)\}.\]
Now, we only need to show that for any $s\geq 3$, $f(1)\geq f(s-1)$. Namely
\begin{align}\label{1}
	\frac{q p^k+\frac{q (s-1)(p^k-1)}{p-1}}{(2s^k-1)^{\log_s p}}\ge\frac{q p^k(s-1)+
	\frac{q (s-1)(p^k-1)}{p-1}}{(s^{k+1}-1)^{\log_s p}}.
\end{align}
Let $u:=2s^k-1$, $v:=q\cdot p^k+\frac{q(s-1)(p^k-1)}{p-1}$. The right part of the inequality (\ref{1}) is converted to
\[
\frac{v\cdot \frac{(s-1)p^{k+1}-(s-1)}{(p+s-2)p^k-(s-1)}}{(\frac{s}{2}u+\frac{s-2}{2})^{\log_s p}}
\leq\frac{v\cdot \frac{(s-1)p^{k+1}-(s-1)}{(p+s-2)p^k-(s-1)}}{(\frac{s}{2}u)^{\log_s p}}
=\frac{v\cdot \frac{(s-1)p^{k+1}-(s-1)}{(p+s-2)p^k-(s-1)}}{ 2^{-\log_s p}\cdot p\cdot u{^{\log_s p}}}.
\]
We just need to show
\[ \frac{p}{2^{\log_s p}}\ge\frac{(s-1)p^{k+1}-(s-1)}{(p+s-2)p^k-(s-1)},\]
which is equivalent of
\begin{equation}\label{2}
(p+s-2-2^{\log_s p}(s-1))p^{k+1}-(p-2^{\log_s p})(s-1)\ge 0.
\end{equation}
Note that
\begin{equation}\label{3}
	2^{\log_s p}=p^{\log_s 2}=s^{\log_s 2}(1+\frac{p-s}{s})^{\log_s 2}<2(1+{\log_s 2}\cdot\frac{p-s}{s})<2(1+\frac{3}{7}\cdot\frac{p-s}{s-1}),
\end{equation}
since $\log_s 2<\frac{3}{7}\cdot\frac{s}{s-1}<1 $ when $s\geq 3$.  We can obtain that
\[p+s-2-2^{\log_s p}(s-1)> p+s-2-2(1+\frac{3}{7}\cdot\frac{p-s}{s-1})(s-1)=\frac{1}{7}(p-s)>0.\]
Thus
\[(p+s-2-2^{\log_s p}(s-1))p^{k+1}\geq(p+s-2-2^{\log_s p}(s-1))p^2,\]
and this causes the left part of the inequality (\ref{2}) is not less than
\[(p+s-2)p^2-2^{\log_s p}(s-1)(p^2-1)-p(s-1).\]
By inequality (\ref{3}), it can be further decreased to
\[(p+s-2)p^2-2(1+\frac{3}{7}\cdot\frac{p-s}{s-1})(s-1)(p^2-1)-p(s-1)=\frac{1}{7}(p-s)p^2+\frac{6}{7}(p-s)-(p-2)(s-1),
\]
and then is not less than
\[
\frac{1}{7}[(q-1)p^2+6(q-1)-7p+14](s-1)\ge\frac{1}{7}[p^2-7p+20](s-1)>0,
\]
for any $q\ge 2$, since $p-s\ge q(s-1)+1-s=(q-1)(s-1)$. That is to say, the inequality (\ref{2}) holds.
\end{proof}

\begin{cor}\label{cor bound}
For any non-negative integer $k$, and any integer $n\in [s^k, s^{k+1})$, $b_{s^{k+1}-1} \leq b_n\leq b_{s^k}$.
\end{cor}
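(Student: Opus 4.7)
The plan is to prove the two inequalities $b_n \le b_{s^k}$ and $b_n \ge b_{s^{k+1}-1}$ separately, using Proposition \ref{pro mon} for the upper bound (by induction on $k$) and chaining Propositions \ref{pro mon} and \ref{pro min} for the lower bound (via iterated digit replacement).

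For the upper bound, I will induct on $k$. The base case $k=0$ is just the ``especially'' clause of Proposition \ref{pro min}, which gives $b_{s-1} \le \cdots \le b_1 = b_{s^0}$. For the inductive step, any $n \in [s^k, s^{k+1})$ can be written as $n = sm + i$ with $m \in [s^{k-1}, s^k)$ and $i \in \{0, 1, \ldots, s-1\}$; the ``especially'' part of Proposition \ref{pro mon} gives $b_n \le b_{sm}$, and the short computation already carried out inside the proof of Proposition \ref{pro mon} yields the identity $b_{sm} = b_m + \frac{r}{p\,m^{\log_s p}}$. The induction hypothesis furnishes $b_m \le b_{s^{k-1}}$, and $m \ge s^{k-1}$ gives $\frac{r}{p\,m^{\log_s p}} \le \frac{r}{p^k}$. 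Since $b_{s^k} = b_{s^{k-1}} + \frac{r}{p^k}$ by applying the same identity at $m = s^{k-1}$, the induction closes with $b_n \le b_{s^k}$.

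For the lower bound, I write $n = [\varepsilon_k \varepsilon_{k-1} \cdots \varepsilon_0]_s$ with $\varepsilon_k \ge 1$ and apply the monotonicity assertion of Proposition \ref{pro mon} iteratively with $\ell = 0, 1, \ldots, k-1$, successively replacing each digit $\varepsilon_\ell$ by $s-1$:
\[
b_{[\varepsilon_k \cdots \varepsilon_{\ell+1} \varepsilon_\ell (s-1)^\ell]_s} \;\ge\; b_{[\varepsilon_k \cdots \varepsilon_{\ell+1} (s-1)^{\ell+1}]_s}.
\]
Chaining these inequalities collapses to $b_n \ge b_{[\varepsilon_k (s-1)^k]_s}$, and then Proposition \ref{pro min} gives $b_{[\varepsilon_k (s-1)^k]_s} \ge b_{[(s-1)(s-1)^k]_s} = b_{s^{k+1}-1}$, as required.

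Because both halves reduce directly to the two preceding propositions combined with elementary arithmetic, I do not anticipate a serious obstacle. The only point requiring a little care is matching the per-step increment $b_{sm} - b_m = \frac{r}{p\,m^{\log_s p}}$ with the target increment $b_{s^k} - b_{s^{k-1}} = \frac{r}{p^k}$ so that the induction closes tightly; the edge case $r = 0$ is automatic since then $b_{sm} = b_m$ and all $b_{s^j}$ coincide.
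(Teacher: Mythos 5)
Your proof is correct. The lower bound is exactly the paper's argument: iterate inequality (\ref{equ mon}) from $\ell=0$ up to $\ell=k-1$ to collapse $n$ to $[\varepsilon_k(s-1)^k]_s$, then invoke Proposition \ref{pro min} for the leading digit. For the upper bound you take a mildly different route. The paper works entirely with $\widetilde{b}_n$: it chains $\widetilde{b}_{sm+i}\le\widetilde{b}_m=\widetilde{b}_{sm}$ down to $\widetilde{b}_n\le\widetilde{b}_{s^k}$ and then converts back to $b_n\le b_{s^k}$ via Proposition \ref{pro b^r}, using implicitly that the correction term $r\cdot\frac{p^{k+1}-1}{p-1}\cdot n^{-\log_s p}$ is largest at $n=s^k$. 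You instead induct on $k$ directly in terms of $b_n$, using $b_n\le b_{sm}=b_m+\frac{r}{p\,m^{\log_s p}}$ and matching the increment $\frac{r}{p\,m^{\log_s p}}\le\frac{r}{p^k}=b_{s^k}-b_{s^{k-1}}$. The two are logically equivalent (your increment bookkeeping is precisely the monotonicity of the $\widetilde{b}$-to-$b$ correction term made explicit), but your version has the small advantage of never leaving the sequence $b_n$, while the paper's version isolates the $r$-dependence once and for all in Proposition \ref{pro b^r}. Both halves of your argument check out, including the base case $k=0$ from the ``especially'' clause of Proposition \ref{pro min} and the edge case $r=0$.
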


\begin{proof}
For any integer $n\in [s^k, s^{k+1})$, suppose $n=[\varepsilon_k \varepsilon_{k-1} \ldots \varepsilon_1 \varepsilon_0]_s$.

At first, by Proposition \ref{pro mon} and Proposition \ref{pro min}, the value of $b_n$ decreases if we replace the last digit which does not equal to $s-1$ in the $s$-ary expansion of $n$ with $s-1$ from the heading of the expansion. Repeating this process, we can obtain,
\[b_n=b_{[\varepsilon_k \varepsilon_{k-1} \ldots \varepsilon_1 \varepsilon_0]_s}
\geq b_{[\varepsilon_k \varepsilon_{k-1} \ldots \varepsilon_1 (s-1)]_s}
\geq b_{[\varepsilon_k \varepsilon_{k-1} \ldots (s-1) (s-1)]_s}
\geq \cdots \geq  b_{[(s-1)^{k+1}]_s}=b_{s^{k+1}-1}.\]
Secondly, by inequality (\ref{equ mon2}) in the proof of Proposition \ref{pro mon} and the definition of $\widetilde{b}_n$, we have
\[\widetilde{b}_{sm+i}\leq \widetilde{b}_m=\widetilde{b}_{sm} \text{ for any integer }  m\geq 1 \text{ and } i\in\{0, 1, \ldots, s-1\}.\]
 Thus
\[\widetilde{b}_n=\widetilde{b}_{[\varepsilon_k \varepsilon_{k-1} \ldots \varepsilon_1 \varepsilon_0]_s}
\leq \widetilde{b}_{[\varepsilon_k \varepsilon_{k-1} \ldots \varepsilon_1 ]_s}
\leq \cdots \leq \widetilde{b}_{[\varepsilon_k]_s}\leq \widetilde{b}_{[1]_s}
=\widetilde{b}_{[10]_s}= \cdots =  \widetilde{b}_{[10^k]_s}=\widetilde{b}_{s^k}.\]
Combine this with Proposition \ref{pro b^r}, we have $b_n\leq b_{s^k}$.
\end{proof}

Now, let complete \textbf{the  proof of Theorem \ref{thm bounded}}.

Using Corollary \ref{cor bound}, it is sufficient to show that for any non-negative integer $k$,
\[\frac{q (s-1)+r}{p-1}\leq b_{s^{k+1}-1} \quad \text{ and } \quad b_{s^k}\leq \frac{q (p-1)+pr}{p-1}.\]
Since
\[
b_{s^k}=\frac{a_{s^k}}{(s^k)^{\log_s p}}=\frac{(q+r)p^k+\frac{r(p^k-1)}{p-1}}{p^k}=\frac{q (p-1)+pr}{p-1}-\frac{r}{(p-1)p^k},
\]
which monotonously increases with the increase of $k$, one has
\[b_{s^k}\leq \lim _{k\to\infty}b_{s^k}=\frac{q (p-1)+pr}{p-1}.
\]
Similarly,
\[
b_{s^{k+1}-1}=\frac{a_{s^{k+1}-1}}{(s^{k+1}-1)^{\log_s p}}=\frac{\frac{(q(s-1)+r)(p^{k+1}-1)}{p-1}}{(s^{k+1}-1)^{\log_s p}}=\frac{q(s-1)+r}{p-1}\cdot \frac{p^{k+1}-1}{(s^{k+1}-1)^{\log_s p}},
\]
and $b_{s^{k+1}-1}=b_{[(s-1)^{k+1}]_s}$ is decreasing with respect to $k$, which follows from Proposition \ref{pro mon}. Then
\[
b_{s^{k+1}-1}\geq \lim _{k\to\infty}b_{s^{k+1}-1}=\frac{q(s-1)+r}{p-1}.
\]

\emph{Acknowledgments.} The work is supported by the Fundamental Research Funds for the Central University (Grant Nos. 2662020LXPY010).

{\small}

\end{document}